\tikzstyle{pnt}=[draw,circle,fill,inner sep=1pt]
\tikzstyle{opnt}=[draw,circle,inner sep=2pt]
\numberwithin{equation}{section}
\def\blue{\textcolor{blue}}
\def\red{\textcolor{red}}
\newtheorem{theorem}{Theorem}[section]
\newtheorem{lemma}[theorem]{Lemma}
\newtheorem{proposition}[theorem]{Proposition}
\newtheorem{Fact}[theorem]{Fact}
\newtheorem{Def}[theorem]{Definition}
\theoremstyle{definition}
\newtheorem{example}[theorem]{Example}
\newtheorem{Prob}[theorem]{Problem}
\newcommand\bi[2]{{{#1}\atopwithdelims(){#2}}}
\newcommand{\N}{\mathbb{N}}
\def\Z{\mathbb{Z}}
\def\NB{\mathrm{BNW}}
\def\NW{\mathrm{NW}}
\def\NC{\mathrm{NC}}
\def\Com{\mathrm{Com}}
\def\arc{\mathrm{arc}}
\def\one{\mathrm{one}}
\def\redd{\mathrm{red}}
\def\graphsize{.9}
\def\boxit#1{\leavevmode\hbox{\vrule\vtop{\vbox{\kern.33333pt\hrule\kern1pt\hbox
{\kern1pt\vbox{#1}\kern1pt}}\kern1pt\hrule}\vrule}}
\begin{document}
\title{A combinatorial bijection on $k$-noncrossing partitions}
\author[Z. Lin]{Zhicong Lin}
\address[Zhicong Lin]{Research Center for Mathematics and Interdisciplinary Sciences, Shandong University, Qingdao 266237, P.R. China}
\email{zhicong.lin@univie.ac.at}

\author[D. Kim]{Dongsu Kim}
\address[Dongsu Kim]{Department of Mathematical Sciences, Korea Advanced Institute of Science
and Technology, Daejeon 34141, Republic of Korea} 
\email{dongsu.kim@kaist.ac.kr}
\date{\today}

\begin{abstract} 
For any integer $k\geq2$, we prove combinatorially the following Euler (binomial) transformation identity
$$
\NC_{n+1}^{(k)}(t)=t\sum_{i=0}^n{n\choose i}\NW_{i}^{(k)}(t),
$$
where $\NC_{m}^{(k)}(t)$ (resp.~$\NW_{m}^{(k)}(t)$) is the sum of weights, $t^\text{number of blocks}$,
of partitions of $\{1,\ldots,m\}$ without $k$-crossings (resp.~enhanced $k$-crossings).
The special $k=2$ and $t=1$ case, asserting the Euler transformation of Motzkin numbers are
Catalan numbers, was discovered by Donaghey 1977.
The result for $k=3$ and $t=1$, arising naturally in a recent study of pattern avoidance 
in ascent sequences and inversion sequences, was proved only analytically. 
\end{abstract}

\maketitle

\section{Introduction}

There has been recent interest in the distributions of $k$-crossings ($k$-nestings) and 
enhanced $k$-crossings (enhanced $k$-nestings) on set partitions. Using the RSK-like 
insertion/deletion algorithms, Chen, Deng, Du, Stanley and Yan~\cite{chen} developed two 
fundamental bijections between partitions and vacillating or hesitating tableaux, from 
which the symmetry of the joint distributions of $k$-crossings (resp.~enhanced
$k$-crossings) and $k$-nestings (resp.~enhanced $k$-nesting) follows. Their results were 
successfully put in a larger context of fillings of Ferrers shape by
Krattenthaler~\cite{kr0} via the growth diagram construction of Fomin. As applications of 
the two bijections of Chen et al., Bousquet-M\'elou and Xin~\cite{bx} enumerated set 
partitions avoiding usual or enhanced $3$-crossings. They also made the conjecture that 
for every $k>3$, the generating function of $k$-noncrossing partitions is not D-finite, 
which is still open. Recently, Burrill, Elizalde, Mishna and Yen~\cite{bemy} carried out
a different approach to the usual and enhanced $k$-nonnesting partitions through generating 
trees and open arc diagrams of partitions. 

In this paper, we prove combinatorially the following Euler (binomial) transformation 
identity, which indicates the numbers of usual and enhanced $k$-noncrossing partitions are 
closely related. 

\begin{theorem}\label{thm:main} Let $\NC_{m}^{(k)}(t)$ (resp.~$\NW_{m}^{(k)}(t)$) be the sum of
weights, $t^\text{number of blocks}$, of partitions of $[m]:=\{1,\ldots,m\}$ without $k$-crossings
(resp.~enhanced $k$-crossings). For $n\geq1$ and $k\geq2$,
\begin{equation}\label{eq:NCNW}
\NC_{n+1}^{(k)}(t)=t\sum_{i=0}^n{n\choose i}\NW_{i}^{(k)}(t),
\end{equation}
where $\NW_0^{(k)}(t)=1$ by convention. 
\end{theorem}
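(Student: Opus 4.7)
The right-hand side of~\eqref{eq:NCNW} has a natural combinatorial interpretation: for each $i\in\{0,\ldots,n\}$, the term $\binom{n}{i}\NW_i^{(k)}(t)$ enumerates pairs $(S,\pi)$, where $S$ is an $i$-subset of $[n]$ and $\pi$ is an enhanced $k$-noncrossing partition of $[i]$, weighted by $t^{\text{number of blocks of }\pi}$; the outer factor $t$ represents a distinguished block, which should correspond to the block containing $1$ in the target $k$-noncrossing partition of $[n+1]$. The plan is to construct an explicit, block-preserving bijection $\phi$ between $k$-noncrossing partitions of $[n+1]$ and such pairs $(S,\pi)$.

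The most direct candidate is the following: given a $k$-noncrossing partition $P$ of $[n+1]$, let $B_1\ni 1$ be its block containing $1$, set $S=\{j-1:j\in[2,n+1]\setminus B_1\}\subseteq[n]$, and let $\pi$ be the order-preserving relabeling of $P|_{[n+1]\setminus B_1}$. Cardinalities and block counts match, but this simple map is \emph{not} the desired bijection. The restriction $P|_{[n+1]\setminus B_1}$ may contain an enhanced $k$-crossing that is not a $k$-crossing of $P$; the simplest instance is $k=2$, $n=3$, where the noncrossing partition $P=\{\{1\},\{2,3,4\}\}$ yields $\pi=\{\{1,2,3\}\}$, which exhibits the enhanced $2$-crossing given by the arcs $(1,2)$ and $(2,3)$. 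Conversely, the valid right-hand side pair $(\{1,3\},\{\{1,2\}\})$ is not in the image of the naive map, since reversing the naive construction produces the crossing partition $\{\{1,3\},\{2,4\}\}$.

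To define the true $\phi$, I plan to augment the naive map with a swap procedure that trades arcs between $B_1$ and the other blocks to repair such defects. Concretely, whenever the current $\pi$ contains an enhanced $k$-crossing that is not a $k$-crossing (that is, one forced by the equality $i_k=j_1$), I would perform a local rewiring that replaces an arc of $\pi$ by an arc of $B_1$ and vice versa, strictly reducing a suitable measure of ``enhanced complexity''. Iterating until no such enhanced crossings remain produces the desired pair $(S,\pi)$. The inverse map reverses the swaps, reconstructing $B_1$ by absorbing the elements indexed by $S$ in a prescribed order. Throughout the procedure one must control the interaction between the current $B_1$ and the remaining arcs so that the $k$-noncrossing property of the ambient partition $P$ is preserved.

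The main obstacle will be showing that the swap procedure is well-defined, preserves the $k$-noncrossing property of $P$ at every intermediate step, terminates at an enhanced $k$-noncrossing pair, and admits an explicit reversible description. The bookkeeping is delicate because the structural difference between a $k$-crossing and an enhanced $k$-crossing is the single allowance $i_k=j_1$ of a shared endpoint, and the whole identity is really a quantitative measure of this discrepancy. A potentially cleaner route is to carry out the argument through the Chen--Deng--Du--Stanley--Yan vacillating/hesitating tableau bijections: the identity then becomes a bijection between vacillating tableaux of length $2(n+1)$ (bounded by $k-1$ rows) and pairs consisting of a subset of $[n]$ together with a hesitating tableau of length $2|S|$, which might admit a cleaner step-by-step description via the local move types that distinguish the two kinds of tableaux.
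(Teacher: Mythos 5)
Your setup is the same as the paper's: the right-hand side is interpreted as pairs (subset of $[n]$, enhanced $k$-noncrossing partition), the extra factor $t$ is the block containing the least element ($0$ in the paper, which colors it red), and the identity is reduced to a weight-preserving bijection between $k$-noncrossing partitions of an $(n+1)$-set and partitions whose non-red part has no enhanced $k$-crossings (the paper's $\NB_{n+1}^{(k)}\to\NC_{n+1}^{(k)}$, Theorem~\ref{thm:NCCNW}). Your diagnosis that the naive ``delete the block containing $1$ and restrict'' map fails, with the $k=2$ example, is correct and matches the paper's motivation. But the proposal stops exactly where the actual proof begins. The entire content of the paper's argument is the explicit pair of operations realizing your ``swap procedure'' and the proof that they are well defined and reversible: the \emph{enhanced left shift} (take the smallest red node under a black $(k-1)$-crossing, take the \emph{innermost} such crossing, and convert it into a black weak $k$-crossing centered at that node, removing the node from the red block) and the \emph{cyclic rotation} (take the rightmost red arc in a $k$-crossing and the lexicographically greatest such $k$-crossing, and cyclically rewire it), together with Lemmas~\ref{shift} and~\ref{cyc_rotation} showing that neither operation ever creates a black $k$-crossing, and the ``trace'' argument (each cyclic rotation leaves a red node under a black $(k-1)$-crossing) that makes the composite invertible. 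None of this is supplied by your sketch: ``a local rewiring that strictly reduces a suitable measure of enhanced complexity'' names the goal, not a construction, and you give no candidate for the measure, the canonical choice of which crossing to rewire, or why the ambient $k$-noncrossing property survives each step --- which is precisely the delicate part (the paper needs the $01$-filling representation and Facts~\ref{fact:1}--\ref{fact:2} to push it through).

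One further structural point you have not anticipated: a single type of local swap does not suffice. In the direction from $\NC_{n+1}^{(k)}$ to pairs, absorbing centers of black weak $k$-crossings into the distinguished block must be preceded (in the paper, interleaved via $\Phi^{-1}$) by undoing cyclic rotations wherever a red node sits under a black $(k-1)$-crossing; without this second operation the map cannot be injective, since distinct $k$-noncrossing partitions would collapse to the same pair. Your alternative suggestion of rerouting the argument through the Chen--Deng--Du--Stanley--Yan vacillating/hesitating tableaux is plausible in spirit but is likewise only a one-sentence hope; no one has exhibited such a tableau-level bijection, and the paper deliberately works on arc diagrams/fillings instead. As it stands, the proposal is a correct reduction plus a statement of the main difficulty, not a proof.
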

The $t=1$ case of~\eqref{eq:NCNW} implies that the $D$-finiteness
(see~\cite[Theorem 6.4.10]{st2}) of the generating function of $k$-noncrossing partitions 
is the same as that of the generating function of enhanced $k$-noncrossing partitions. 
There are several partial results that lead to the discovery of~\eqref{eq:NCNW}.
Before we state our motivation, some definitions on set partitions are required.
Let $\Pi_{n}$ be the set of all partitions of $[n]$.
Any $P\in\Pi_n$ can be identified with its {\em arc diagram} defined as follows:

\begin{Def}[Arc diagram of a partition]\label{ArcDiagram}
Nodes are $1,2,\dots,n$ from left to right. There is an arc from $i$ to $j$, $i<j$, whenever
both $i$ and $j$ belong to the same block, say $B\in P$, and $B$ contains no $l$ with $i<l<j$.
There is a loop from $i$ to itself if $\{i\}$ is a block in $P$.
\end{Def}

\begin{example}
The arc diagram of $\{\{1,3\},\{2,5,6\},\{4\}\}\in\Pi_6$:
\begin{center}
\begin{tikzpicture}
\SetVertexNormal
\SetGraphUnit{1.3}
\tikzset{VertexStyle/.append style={inner sep=0pt,minimum size=5mm}}
\Vertex{1}
\EA(1){2}\EA(2){3}\EA(3){4}\EA(4){5}\EA(5){6}
\tikzset{EdgeStyle/.append style = {bend left = 60}}
\Loop[dist=.8cm, dir=EA](4.north)
\Edge(1)(3)\Edge(2)(5)
\Edge(5)(6)
\end{tikzpicture}
\end{center}
\end{example}

A partition has a {\em crossing} (resp.{\em~nesting}) if there exist two arcs
$(i_1,j_1)$ and $(i_2,j_2)$ in its arc diagram such that $i_1<i_2<j_1<j_2$
(resp.~$i_1<i_2<j_2<j_1$). It is well known (cf.~\cite{pe2}) that the number of
partitions in $\Pi_n$ with no crossings (or with no nestings) is given by the $n$-th
{\em Catalan number}
$$
C_n=\frac{1}{n+1}{2n\choose n}.
$$
The crossings (resp.~nestings) of partitions have a natural generalization called
$k$-crossings (resp.~$k$-nestings) for any fixed integer $k\geq2$.

\begin{Def}[$k$-crossing and enhanced $k$-crossing]
A {\em$k$-crossing} of $P\in\Pi_n$ is a $k$-subset $(i_1,j_1),(i_2,j_2),\ldots,(i_k,j_k)$ 
of arcs in the arc diagram of $P$ such that 
$$
i_1<i_2<\cdots<i_k<j_1<j_2<\cdots<j_k.
$$
A {\em weak $k$-crossing} of $P\in\Pi_n$ is a $k$-subset
$(i_1,j_1),(i_2,j_2),\ldots,(i_k,j_k)$ of arcs in the arc diagram of $P$ such that 
$$
i_1<i_2<\cdots<\blue{i_k= j_1}<j_2<\cdots<j_k.
$$
The $k$-crossings and the weak $k$-crossings of $P$ are collectively called the
{\em enhanced $k$-crossings} of $P$. A partition without any $k$-crossings
(resp.~enhanced $k$-crossings) is called {\em$k$-noncrossing}
(resp.{\em~enhanced $k$-noncrossing}). 
The {\em$k$-nestings} and the {\em enhanced $k$-nestings} of a partition are defined
similarly by replacing the above two inequalities with
$$
i_1<i_2<\cdots<i_k<j_k<j_{k-1}<\cdots<j_1\quad\text{and}
\quad i_1<i_2<\cdots<\blue{i_k= j_k}<j_{k-1}<\cdots<j_1.
$$
\end{Def}

\begin{example}
A $3$-crossing and a weak $3$-crossing are depicted below:
\begin{center}
\begin{tikzpicture}[scale=1]
\foreach \i in {1,...,6} \node[pnt,label=below:] at (\i,0)(\i) {};
\draw(1) to [bend left=45] (4);
\draw(2) to [bend left=45] (5);
\draw(3) to [bend left=45] (6);
\foreach \i in {8,...,12} \node[pnt,label=below:] at (\i,0)(\i) {};
\draw(8) to [bend left=45] (10);
\draw(9) to [bend left=45] (11);
\draw(10) to [bend left=45] (12);
\end{tikzpicture}
\end{center}
\end{example}
 
Note that enhanced $2$-noncrossing partitions in $\Pi_n$ are {\em noncrossing partial 
matchings} of~$[n]$, i.e.~noncrossing partitions for which the blocks have size one or 
two. Since noncrossing partial matchings of $[n]$ are counted by the $n$-th
{\em Motzkin number} $M_n=\sum_{i=0}^{\lfloor n/2\rfloor}{n\choose 2i}C_i$
(see~\cite[Page~43]{pe2}), identity~\eqref{eq:NCNW} reduces to 
\begin{equation}\label{eq:CM}
C_{n+1}=\sum_{i=0}^n{n\choose i}M_i
\end{equation}
when $k=2$ and $t=1$. This Motzkin--Catalan identity was first discovered by
Donaghey~\cite{do}, who interpreted it as a generating function counting plane trees
by number of branches. Many other interpretations in terms of different models are found later
in~\cite{doS,DS,deng} and lately in~\cite{ge}. Let $\NC_n^{(k)}$ (resp.~$\NW_n^{(k)}$) be 
the set of all $k$-noncrossing (resp.~enhanced $k$-noncrossing) partitions in~$\Pi_n$.
If $k$ is sufficiently large, i.e. $k>\frac{n+1}{2}$, then we have
$\NW_n^{(k)}=\NC_n^{(k)}=\Pi_n$, and (\ref{eq:NCNW}) is equivalent to
$$
\text{for all $m\geq0$, }\ S(n+1,m+1)=\sum_{i=0}^n\bi{n}{i}S(i,m)
$$
where $S(a,b)$ denotes the {\em Stirling number of the second kind}.

The first not so trivial case of~\eqref{eq:NCNW} is when $k=3$ and $t=1$, which
arises naturally in establishing an equinumerosity conjecture of Yan~\cite{yan} and 
Martinez--Savage~\cite{ms}.
It was proved in~\cite{lin} analytically that {\em inversion sequences} with no weakly
decreasing subsequence of length $3$ and enhanced $3$-noncrossing partitions have the 
same cardinality, from which, together with the connections with $021$-avoiding
{\em ascent sequences}, this special case of~\eqref{eq:NCNW} follows.
Moreover, the $t=1$ case of~\eqref{eq:NCNW} for general $k$ was also conjectured
in~\cite{lin}. On the other hand, even the $k=2$ case of~\eqref{eq:NCNW}, which is a
$t$-extension of~\eqref{eq:CM}, seems new:
\begin{equation}\label{eq:t-CM}
C_{n+1}(t)=t\sum_{i=0}^n\bi{n}{i}M_i(t).
\end{equation}
Here $C_{n}(t)$ and $M_n(t)$ denote respectively the generating functions of noncrossing 
partitions of $[n]$ and noncrossing partial matchings of $[n]$, where $t$ corresponds to 
a block in partitions or matchings. Note that $C_{n}(t)$ is the $n$-th
{\em Narayana polynomial} $\sum_{k=1}^n\frac{1}{n}{n\choose k}{n\choose k-1}t^k$, while 
$M_n(t)$ turns out to be the {\em$\gamma$-polynomial} of $C_{n+1}(t)$ as we shall see in 
Section~\ref{cross1}. 

It should be pointed out that Theorem~\ref{thm:main} is not true (except when $t=1$)
if one replaces ``enhanced $k$-noncrossing partitions'' by ``enhanced $k$-nonnesting 
partitions'', as distributions of the number of blocks on these two objects are different 
(see also~\cite[Theorem~4.3]{chen}). 

The rest of this paper is organized as follows. In Section~\ref{trian}, we will show how 
the $t=1$ case of~\eqref{eq:NCNW} and Eq.~\eqref{eq:t-CM} follow easily from two 
different representations of set partitions as $01$-fillings of a triangular shape. The 
complete bijective proof of~\eqref{eq:NCNW} is provided in Section~\ref{NCNW} through the 
introduction of two well-designed operations on $k$-noncrossing partitions. We conclude 
our paper with two related open problems in Section~\ref{sec:final}.

\section{Via $01$-filling of triangular shape}
\label{trian}
We shall review two representations of set partitions as $01$-fillings of a triangular 
shape from~\cite{kr0} (see also~\cite{yan2}), from which one can see the special $t=1$ 
case of~\eqref{eq:NCNW} directly.

A {\em triangular shape} of order $n$, denoted by $\triangle_n$, is the Young diagram of 
the staircase partition $(1,2,\ldots,n)$ in the French notation. For a triangular shape, we 
number its rows from top to bottom and its columns from left to right and identify cells 
using matrix coordinates. 
For example, the triangular shape $\triangle_5$ is depicted below with rows and columns 
numbered (the cell $(4,2)$ is colored blue):
\begin{center}
\begin{tikzpicture}[scale=.5]
\draw [thick]
(0,0)--(5,0) (0,1)--(5,1) (0,2)--(4,2) (0,3)--(3,3)(0,4)--(2,4) (0,5)--(1,5)
(0,0)--(0,5)(1,0)--(1,5) (2,0)--(2,4)(3,0)--(3,3) (4,0)--(4,2) (5,0)--(5,1);
\draw(-0.5,4.5) node{$1$};
\draw(-0.5,3.5) node{$2$};
\draw(-0.5,2.5) node{$3$};
\draw(-0.5,1.5) node{$4$};
\draw(-0.5,0.5) node{$5$};

\draw(0.5,-0.5) node{$1$};
\draw(1.5,-0.5) node{$2$};
\draw(2.5,-0.5) node{$3$};
\draw(3.5,-0.5) node{$4$};
\draw(4.5,-0.5) node{$5$};
\draw[fill=blue!25] (1,1) rectangle (2,2);

\end{tikzpicture}
\end{center}

A triangular shape $\triangle_n$ with each cell filled by either a $0$ or a $1$ is called 
a {\em $01$-filling} of $\triangle_n$. For better visibility, we will represent
a $01$-filling by replacing a $1$ with a $\bullet$ but suppressing all occurrences of $0$. 
An {\em SE-chain} ({\em South-East chain}) of a $01$-filling of $\triangle_n$ is a set of 
$1$'s such that any $1$ in the sequence is below and to the right of the preceding $1$ in 
the sequence. Moreover, an SE-chain is said to be {\em proper} if the smallest rectangle 
containing the chain is contained in $\triangle_n$. For instance, the $01$-filling in the left 
side of Fig.~\ref{filling} has two SE-chains of length $3$, one chain containing $1$'s in 
cells $(4,1), (6,3), (7,5)$ and the other chain containing $1$'s in cells
$(4,1), (6,3), (8,4)$, where only the latter chain is proper. 

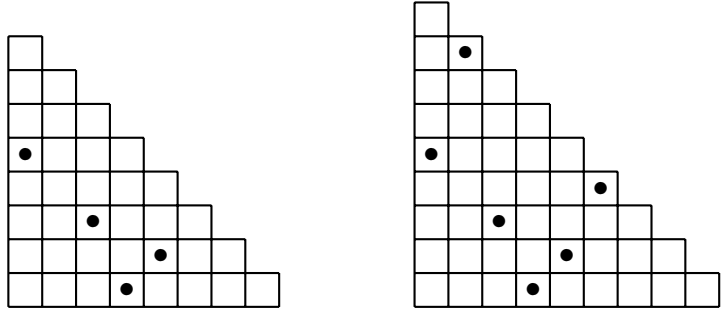
\begin{figure}
\begin{center}
\begin{tikzpicture}[scale=.45]
\draw [thick]
(0,0)--(8,0) (0,1)--(8,1) (0,2)--(7,2) (0,3)--(6,3) (0,4)--(5,4) (0,5)--(4,5) (0,6)--(3,6) (0,7)--(2,7) (0,8)--(1,8)

(0,0)--(0,8)(1,0)--(1,8) (2,0)--(2,7)(3,0)--(3,6) (4,0)--(4,5) (5,0)--(5,4) (6,0)--(6,3) (7,0)--(7,2) (8,0)--(8,1);

\draw(0.5,4.5) node{$\bullet$};\draw(2.5,2.5) node{$\bullet$};\draw(3.5,0.5) node{$\bullet$};
\draw(4.5,1.5) node{$\bullet$};
\draw [thick]
(12,0)--(21,0) (12,1)--(21,1) (12,2)--(20,2) (12,3)--(19,3) (12,4)--(18,4) (12,5)--(17,5) 
(12,6)--(16,6) (12,7)--(15,7) (12,8)--(14,8) (12,9)--(13,9)

(12,0)--(12,9) (13,0)--(13,9) (14,0)--(14,8) (15,0)--(15,7) (16,0)--(16,6) (17,0)--(17,5) (18,0)--(18,4)
(19,0)--(19,3) (20,0)--(20,2) (21,0)--(21,1);

\draw(12.5,4.5) node{$\bullet$};\draw(13.5,7.5) node{$\bullet$};\draw(14.5,2.5) node{$\bullet$};
\draw(15.5,0.5) node{$\bullet$}; \draw(16.5,1.5) node{$\bullet$};\draw(17.5,3.5) node{$\bullet$};
\end{tikzpicture}
\end{center}
\caption{Two $01$-filling representations $\mathcal{C}(P)$ and $\mathcal{E}(P)$ for
the set partition $P=\{\{1,5,8\},\{2\},\{3,7\},\{4,9\},\{6\}\}$.\label{filling}}
\end{figure}

A set partition $P\in\Pi_n$ can be encoded by a $01$-filling of $\triangle_{n-1}$ 
(resp.~of $\triangle_{n}$), denoted $\mathcal{C}(P)$ (resp.~$\mathcal{E}(P)$), which is 
the $01$-filling with a $1$ in cell $(j-1,i)$ (resp.~$(j,i)$) if and only if $(i,j)$ with $i<j$
(resp. $i\leq j$) is an arc in the arc diagram of $P$. See Fig.~\ref{filling} for the $01$-filling 
representations $\mathcal{C}$ and $\mathcal{E}$ of partition
$\{\{1,5,8\},\{2\},\{3,7\},\{4,9\},\{6\}\}\in\Pi_9$.
Denote by $\mathcal{C}_n^{(k)}$ the set of $01$-fillings $F$
of $\triangle_{n}$ satisfying three conditions:
(1) each row in $F$ contains at most one $1$,
(2) each column in $F$ contains at most one $1$,
and (3) $F$ has no proper SE-chain of length $k$.
Let $\mathcal{E}_n^{(k)}$ denote the subset of $\mathcal{C}_n^{(k)}$ consisting of all
elements $F$ such that, for each $i$, either the $i$-th row or the $i$-th column of
$F$ contains a $1$.
The following fact, first realized by Krattenthaler~\cite{kr0}, is the crucial property
of $\mathcal{C}$ and~$\mathcal{E}$.
\begin{theorem}\label{th:krat}
The mappings $\mathcal{C}:\NC_{n+1}^{(k)}\rightarrow\mathcal{C}_n^{(k)}$ and
$\mathcal{E}:\NW_n^{(k)}\rightarrow\mathcal{E}_n^{(k)}$ are bijections.
\end{theorem}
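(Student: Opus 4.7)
My plan is to verify both bijections by translating the defining conditions on the $01$-filling directly into structural properties of the arc diagram. I will treat $\mathcal{C}$ and $\mathcal{E}$ in parallel, since the $\mathcal{E}$-case differs from the $\mathcal{C}$-case only by a unit shift in the row index (to accommodate loops at singletons) and a corresponding weakening of the crossing inequality.

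The bulk of the argument lies in checking $\mathcal{C}$. The row and column conditions of $\mathcal{C}_n^{(k)}$ are immediate from Definition~\ref{ArcDiagram}: each vertex $j$ is the right endpoint of at most one arc (from the largest smaller element in its block), and each vertex $i$ is the left endpoint of at most one arc. The key step is the SE-chain translation. A length-$k$ SE-chain in $\mathcal{C}(P)$ is a set of $1$'s in cells $(j_s-1, i_s)$ for $s=1,\ldots,k$ with $i_1<\cdots<i_k$ and $j_1<\cdots<j_k$, arising from arcs $(i_s, j_s)$. The smallest enclosing rectangle lies inside $\triangle_n$ iff its top-right corner $(j_1-1, i_k)$ is a cell of $\triangle_n$, i.e.~$i_k \leq j_1-1$, equivalently $i_k < j_1$; this is precisely the defining inequality of a $k$-crossing. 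Hence $\mathcal{C}(P)$ has no proper SE-chain of length $k$ iff $P$ has no $k$-crossing. The inverse map reads each $1$ in cell $(j-1, i)$ as an arc $(i, j)$ and takes transitive closure to recover the blocks; conditions (1)--(2) guarantee that these arcs form a well-defined system of ``minimal'' arcs for a unique partition of $[n+1]$.

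For $\mathcal{E}$ the same argument applies with two modifications: loops $(i, i)$ of singletons occupy the diagonal cells $(i, i)$, and the rectangle inequality becomes $i_k \leq j_1$ (now allowing the equality $i_k = j_1$), which matches the enhanced $k$-crossing definition including the weak case. The extra defining condition of $\mathcal{E}_n^{(k)}$ --- that every row $i$ or column $i$ contains a $1$ --- holds because every vertex of $[n]$ is either an endpoint of a proper arc or a singleton whose loop lies in cell $(i, i)$ (which by conditions (1)--(2) then occupies both row $i$ and column $i$ alone); conversely, this condition lets the inverse map detect singletons from diagonal $1$'s and recover all of $[n]$ unambiguously.

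The main obstacle, which is mostly careful bookkeeping rather than conceptual difficulty, is pinning down the index shift $(j-1, i)$ versus $(j, i)$ in the two encodings and verifying that it is exactly this shift that converts the strict inequality $i_k < j_1$ (for usual $k$-crossings) into the weak inequality $i_k \leq j_1$ (for enhanced $k$-crossings) via the ``proper rectangle'' condition. Once this dictionary is established, well-definedness and invertibility of both maps follow directly, establishing Theorem~\ref{th:krat}.
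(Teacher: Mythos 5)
Your proposal is correct. Note, however, that the paper does not actually prove this statement: it is quoted as a known fact attributed to Krattenthaler~\cite{kr0}, whose general framework (increasing/decreasing chains in fillings of Ferrers shapes, via growth diagrams) specializes to it. What you supply is the direct verification that the paper omits, and it is the standard dictionary: conditions (1)--(2) encode that each vertex is the left (resp.\ right) endpoint of at most one arc; an SE-chain corresponds to arcs with $i_1<\cdots<i_k$ and $j_1<\cdots<j_k$; and properness of the enclosing rectangle is exactly the top-right cell condition, giving $i_k<j_1$ for $\mathcal{C}$ (usual $k$-crossing) and $i_k\leq j_1$ for $\mathcal{E}$ (enhanced $k$-crossing, the diagonal cells handling the weak case), so proper SE-chains of length $k$ match $k$-crossings, resp.\ enhanced $k$-crossings, exactly. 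Two points are left implicit but are easily completed: for invertibility, a set of arcs satisfying (1)--(2) decomposes into strictly increasing paths, so it is precisely the set of consecutive-element arcs of a unique partition (with vertices of $[n+1]$ touched by no $1$ becoming singletons in the $\mathcal{C}$-case, while the extra row-or-column condition of $\mathcal{E}_n^{(k)}$ forces every vertex of $[n]$ to be covered, diagonal $1$'s giving the singleton loops); and one should observe that loops can never occur in a proper SE-chain, matching the fact that enhanced $k$-crossings involve no loops. With these routine remarks your argument is a complete and correct proof of Theorem~\ref{th:krat}.
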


For each $j\in[n]$, the hook of $\triangle_n$ formed by the $j$-th row and the $j$-th 
column is called the {\em $j$-th corner hook} of $\triangle_n$. A corner hook of a
$01$-filling is said to be {\em zero}, if it does not contain any $1$; otherwise, {\em nonzero}.
It is clear that $\mathcal{E}_n^{(k)}$ is the set of all $01$-fillings in
$\mathcal{C}_n^{(k)}$ with no zero corner hooks. This inspires a natural bijection $f$ 
from $\mathcal{C}_n^{(k)}$ to the disjoint union of the cartesian products of
$(i+1)$-compositions of $n+1$ and $\mathcal{E}_i^{(k)}$ for $i=0,1,\ldots,n$.

Recall that an {\em $i$-composition of $n$} is a sequence $c_1c_2\cdots c_i$ of positive 
integers summing to~$n$. Let $\Com_i(n)$ denote the set of all
$i$-compositions of $n$. For each $01$-filling $F$ in $\mathcal{C}_n^{(k)}$ we set
$f(F)=(c,\mathcal{D}(F))\in\Com_{i+1}(n+1)\times\mathcal{E}_i^{(k)}$, where
$c=c_1c_2\cdots c_{i+1}\in\Com_{i+1}(n+1)$,
$$
\{c_1,c_1+c_2,\dots,c_1+c_2+\cdots+c_i\}=\{j\in[n]:\text{the $j$-th corner hook of $F$
is nonzero}\},
$$
and $\mathcal{D}(F)$ is obtained from $F$ by removing all the cells in zero corner 
hooks and compressing the remaining cells. 

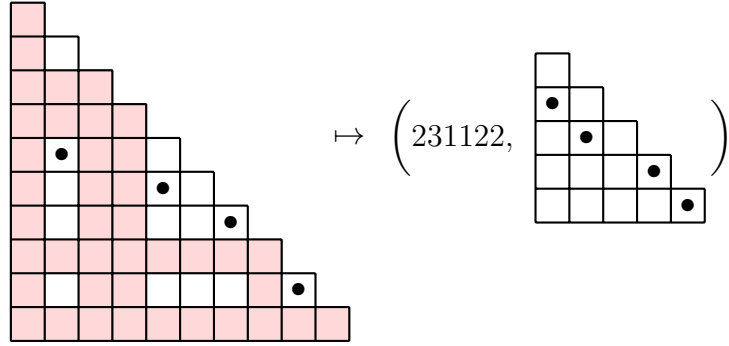
\begin{figure}
\begin{center}
\begin{tikzpicture}[scale=.45]
\draw[fill=red!15] (0,0) rectangle (10,1);
\draw[fill=red!15] (0,2) rectangle (8,3);
\draw[fill=red!15] (0,6) rectangle (4,7);
\draw[fill=red!15] (0,7) rectangle (3,8);

\draw[fill=red!15] (0,0) rectangle (1,10);
\draw[fill=red!15] (2,0) rectangle (3,8);
\draw[fill=red!15] (3,0) rectangle (4,7);
\draw[fill=red!15] (7,0) rectangle (8,3);
\draw [thick]
(0,0)--(10,0) (0,1)--(10,1) (0,2)--(9,2) (0,3)--(8,3) (0,4)--(7,4) (0,5)--(6,5) (0,6)--(5,6) (0,7)--(4,7) (0,8)--(3,8) (0,9)--(2,9) (0,10)--(1,10)

(0,0)--(0,10)(1,0)--(1,10) (2,0)--(2,9)(3,0)--(3,8) (4,0)--(4,7) (5,0)--(5,6) (6,0)--(6,5) (7,0)--(7,4) (8,0)--(8,3) (9,0)--(9,2) (10,0)--(10,1);

\draw(1.5,5.5) node{$\bullet$};\draw(4.5,4.5) node{$\bullet$};\draw(6.5,3.5) node{$\bullet$};
\draw(8.5,1.5) node{$\bullet$};

\draw(10,6) node{$\mapsto$};
\draw(13,6) node{$\biggl(231122,$};
\draw(21,6) node{$\biggr)$};
\draw [thick]
(15.5,3.5)--(20.5,3.5) (15.5,4.5)--(20.5,4.5) (15.5,5.5)--(19.5,5.5) 
(15.5,6.5)--(18.5,6.5) (15.5,7.5)--(17.5,7.5)(15.5,8.5)--(16.5,8.5)

(15.5,3.5)--(15.5,8.5) (16.5,3.5)--(16.5,8.5) (17.5,3.5)--(17.5,7.5) (18.5,3.5)--(18.5,6.5) (19.5,3.5)--(19.5,5.5) (20.5,3.5)--(20.5,4.5);

\draw(16,7) node{$\bullet$};\draw(17,6) node{$\bullet$};\draw(19,5) node{$\bullet$};
\draw(20,4) node{$\bullet$}; 
\end{tikzpicture}
\end{center}
\caption{An example of $f$.\label{del}}
\end{figure}
See Fig.~\ref{del} for an example of $f$. It is easy to see that $f$ is invertible.

\begin{theorem}\label{th:f}
The mapping $f: \mathcal{C}_n^{(k)}\rightarrow\bigcup_{i=0}^n\Com_{i+1}(n+1)\times\mathcal{E}_i^{(k)}$
is a bijection. 
\end{theorem}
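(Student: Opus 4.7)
The plan is to construct an explicit inverse to $f$. Given a pair $(c, F') \in \Com_{i+1}(n+1) \times \mathcal{E}_i^{(k)}$ with $c = c_1 c_2 \cdots c_{i+1}$, I set $a_\ell := c_1 + c_2 + \cdots + c_\ell$ for $1 \leq \ell \leq i$, so that $1 \leq a_1 < a_2 < \cdots < a_i \leq n$. The inverse $g(c, F')$ is the $01$-filling $F$ of $\triangle_n$ placing a $1$ at $(a_r, a_c)$ whenever $F'$ has a $1$ at $(r, c)$, and zeros elsewhere; this makes the $j$-th corner hook of $F$ empty exactly for $j \in [n] \setminus \{a_1, \ldots, a_i\}$. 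Since $c$ is recoverable from $\{a_1, \ldots, a_i\} \subseteq [n]$, the identities $f \circ g = \mathrm{id}$ and $g \circ f = \mathrm{id}$ will be immediate once both maps are shown to be well-defined.

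The well-definedness of $f$ amounts to verifying $\mathcal{D}(F) \in \mathcal{E}_i^{(k)}$ for every $F \in \mathcal{C}_n^{(k)}$: the row and column at-most-one-$1$ conditions are inherited from $F$, and the requirement that each row or column of $\mathcal{D}(F)$ contains a $1$ holds by construction, since precisely the zero corner hooks have been removed. Symmetrically, for $g$ one checks $F \in \mathcal{C}_n^{(k)}$; the at-most-one conditions again follow directly, because inserting empty rows and empty columns cannot create a second $1$ in any row or column.

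The main obstacle is showing that the proper SE-chain condition of length $k$ is preserved in both directions. Here I plan to exploit that every $1$ in $F = g(c, F')$ sits at some cell $(a_r, a_c)$ with $c \leq r$, so SE-chains in $F$ correspond to SE-chains $(r_1, c_1), \ldots, (r_k, c_k)$ in $\triangle_i$ via the order-preserving relabeling $a_\ell \leftrightarrow \ell$. Monotonicity of this relabeling makes the properness constraint $a_{c_k} \leq a_{r_1}$ inside $\triangle_n$ equivalent to $c_k \leq r_1$ inside $\triangle_i$. Therefore proper SE-chains of length $k$ in $F$ biject with proper SE-chains of length $k$ in $\mathcal{D}(F)$, and the length-$k$ avoidance condition transfers in both directions. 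With $f$ and $g$ both well-defined and mutually inverse by the above decoding, the theorem follows.
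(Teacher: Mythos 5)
Your proposal is correct and follows the same route the paper intends: the paper simply asserts that $f$ is invertible (``it is easy to see''), and your explicit inverse $g$ via the order-preserving relabeling $a_\ell \leftrightarrow \ell$, together with the observation that properness of an SE-chain reduces to the single corner condition $a_{c_k}\leq a_{r_1}$ (equivalently $c_k\leq r_1$ after compression), fills in exactly the verification the paper leaves to the reader. No gaps; your treatment of the transfer of conditions (1)--(3) in both directions is sound.
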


Since the cardinality of $\Com_{i+1}(n+1)$ is ${n\choose i}$, Theorem~\ref{th:f} and 
Theorem~\ref{th:krat} together provide a combinatorial proof of~\eqref{eq:NCNW} for $t=1$.
\subsection{On identity~\eqref{eq:t-CM} and $\gamma$-expansion of Narayana polynomials}
\label{cross1}

A polynomial $H(t)=\sum_{j=1}^dh_jt^j$ is said to be {\em palindromic} if $h_j=h_{d-j}$ 
for all $j=1,\ldots,d-1$. Any palindromic polynomial $H(t)\in\Z[t]$ can be written 
uniquely as
$$
H(t)=\sum_{i=1}^{\lfloor\frac{d+1}{2}\rfloor}\gamma_i\,t^i(1+t)^{d+1-2i},
$$
where $\gamma_i\in\Z$. If $\gamma_i\geq0$ for all $i$, then $H(t)$ is said to be
{\em $\gamma$-positive}. It is well known and not difficult to see that
$\gamma$-positivity implies unimodality, namely 
$$
h_1\leq h_2\leq\cdots\leq h_{\lfloor(d+1)/2\rfloor}\geq\cdots\geq h_{d-1}\geq h_d. 
$$
Many polynomials arising from algebraic combinatorics and discrete geometry have been 
shown to be $\gamma$-positive. The reader is referred to the book exposition by 
Petersen~\cite{pe2} for further information on $\gamma$-positivity. 

Note that Narayana polynomials $C_n(t)$ are $\gamma$-positive. For instance, $C_5(t)$ is
$\gamma$-positive since
$$
C_5(t)=t+10t^2+20t^3+10t^4+t^5=t(1+t)^4+6t^2(1+t)^2+2t^3.
$$
In general, we have the following $\gamma$-expansion of $C_{n+1}(t)$
(cf.~\cite{bp} or~\cite{prw}):
$$
C_{n+1}(t)=t\sum_{i=0}^{\lfloor\frac{n}{2}\rfloor}\gamma_{n+1,i}\,t^i(1+t)^{n-2i},
$$
where $\gamma_{n+1,i}={n\choose 2i}C_i$. 
More interestingly, the coefficient $\gamma_{n+1,i}$ has several combinatorial 
interpretations, one of which shows $\gamma_{n+1,i}$ is the number of Motzkin paths of 
length $n$ with $i$ up steps (cf.~\cite[Lemma~18]{lin0}). It is somewhat surprising that 
$\gamma_{n+1,i}$ appears as a coefficient of $M_{n}(t)$, even though the proof of this 
relationship is simple. 

\begin{proposition}\label{gamma:match}
The number of noncrossing partial matchings of $[n]$ with $i$ blocks
is~$\gamma_{n+1,n-i}$. In other words,
$$
M_n(t)=\sum_{i=\lfloor(n+1)/2\rfloor}^n\gamma_{n+1,n-i}t^i.
$$
\end{proposition}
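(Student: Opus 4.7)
The plan is to count noncrossing partial matchings of $[n]$ with exactly $i$ blocks directly, and to check that the resulting formula matches $\gamma_{n+1,n-i}=\binom{n}{2(n-i)}C_{n-i}$. Since every block has size one (a fixed point) or size two (an arc), I would first split the $i$ blocks into $a$ arcs and $f$ fixed points. The two relations $i=a+f$ and $n=2a+f$ then force $a=n-i$ and $f=2i-n$; in particular, $a$ is uniquely determined by $n$ and $i$, and the condition $f\geq 0$ explains the summation range $i\geq\lceil n/2\rceil=\lfloor(n+1)/2\rfloor$ in the stated formula.

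Next I would factor the enumeration into two independent choices. First, choose which $2a$ of the $n$ positions will serve as arc endpoints (the remaining $f$ positions automatically becoming fixed points); this gives $\binom{n}{2a}=\binom{n}{2(n-i)}$ possibilities. Second, noncrossingly match up these $2a$ chosen positions; since the noncrossing property depends only on the relative order of the endpoints, the number of such matchings equals the number of noncrossing perfect matchings on $[2a]$, which is the Catalan number $C_a=C_{n-i}$.

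Multiplying the two counts gives $\binom{n}{2(n-i)}C_{n-i}$, which by the displayed formula $\gamma_{n+1,i}=\binom{n}{2i}C_i$ is exactly $\gamma_{n+1,n-i}$. Summing over $i$ in the indicated range then yields the identity $M_n(t)=\sum_i\gamma_{n+1,n-i}\,t^i$.

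I do not foresee any real obstacle: the argument is a direct counting of fixed points versus arcs, together with the classical fact that noncrossing perfect matchings of a $(2a)$-point set are enumerated by $C_a$. The only minor thing to be careful about is bookkeeping of the index shift (the coefficient of $t^i$ in $M_n(t)$ corresponds to $\gamma_{n+1,n-i}$, not $\gamma_{n+1,i}$), and verifying that the range of summation in the proposition exactly matches the nonnegativity constraints on $a$ and $f$.
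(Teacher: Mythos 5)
Your proof is correct, and it takes a different (more direct) route than the paper. The paper proves the proposition via the classical bijection between noncrossing partial matchings of $[n]$ and Motzkin paths of length $n$ (an arc opener becomes an up step, a closer a down step, a singleton a horizontal step), and then invokes the combinatorial interpretation of $\gamma_{n+1,i}$ as the number of Motzkin paths of length $n$ with $i$ up steps (cited from the literature); the statistic ``number of blocks'' is tracked through the bijection since non-loop arcs correspond to up steps. You instead count noncrossing partial matchings with $i$ blocks outright: the relations $i=a+f$, $n=2a+f$ pin down $a=n-i$ arcs and $f=2i-n$ fixed points, and since singletons can never participate in a crossing, the matching factors as a choice of the $2a$ arc endpoints, $\binom{n}{2(n-i)}$ ways, times a noncrossing perfect matching of those points, $C_{n-i}$ ways; this equals $\gamma_{n+1,n-i}$ by the explicit formula $\gamma_{n+1,i}=\binom{n}{2i}C_i$ stated just before the proposition, and your bookkeeping of the summation range ($f\geq 0$ forcing $i\geq\lfloor(n+1)/2\rfloor$) is right. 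Your argument is more elementary and self-contained (it needs only the Catalan count of noncrossing perfect matchings and the binomial-Catalan expression for $\gamma_{n+1,i}$), whereas the paper's bijection is more structural: it exhibits the equidistribution at the level of objects and statistics (loops versus horizontal steps, arcs versus up steps), which fits the $\gamma$-positivity discussion in that section. Both are valid given what the paper has set up.
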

\begin{proof}
We will apply a classical bijection between noncrossing partial matchings of $[n]$ and 
Motzkin paths of length $n$. 

Recall that a {\em Motzkin path} of length $n$ is a lattice path in $\N^2$ starting at
$(0,0)$, ending at $(n,0)$, with three possible steps: 
$$
\text{$(1,1)=U$ (up step), $(1,-1)=D$ (down step),\,\,\,and\,\,\,$(1,0)=H$
(horizontal step).}
$$
For a noncrossing partial matching $P\in\NW_n^{(2)}$, define the corresponding Motzkin 
path $M=s_1s_2\ldots s_n$, represented as a word on the alphabet $\{U,D,H\}$, by
$$
s_{i}=
\begin{cases}
\,U,\quad&\text{if $(i,b)$ is an arc in the arc diagram of $P$},\\
\,D,\quad&\text{if $(a,i)$ is an arc in the arc diagram of $P$},\\
\,H,\quad&\text{if $(i,i)$ is a loop in the arc diagram of $P$}.
\end{cases}
$$
For an example of this correspondence, see Fig.~\ref{fig:motzkin}. 
\begin{figure}
\begin{center}
\begin{tikzpicture}[scale=.8]
\draw[step=1,color=gray] (0,0) grid (12,3); \draw [very thick]
(0,0)--(1,1) --(2,2)--(3,1)--(4,2)--(5,3)--(6,2)--(7,2)--(8,1)--(9,0)--(10,1)--(11,1)--(12,0);
\draw (0.5,-0.4) node{$1$};\draw (1.5,-0.4) node{$2$};
\draw (2.5,-0.4) node{$3$};\draw (3.5,-0.4) node{$4$};
\draw (4.5,-0.4) node{$5$};\draw (5.5,-0.4) node{$6$};
\draw (6.5,-0.4) node{$7$};\draw (7.5,-0.4) node{$8$};
\draw (8.5,-0.4) node{$9$};\draw (9.5,-0.4) node{$10$};
\draw (10.5,-0.4) node{$11$};\draw (11.5,-0.4) node{$12$};
\end{tikzpicture}
\end{center}
\caption{The Motzkin path corresponds to noncrossing partial matching
$\{\{1,9\},\{2,3\},\{4,8\},\{5,6\},\{7\},\{10,12\},\{11\}\}$. }
\label{fig:motzkin}
\end{figure}
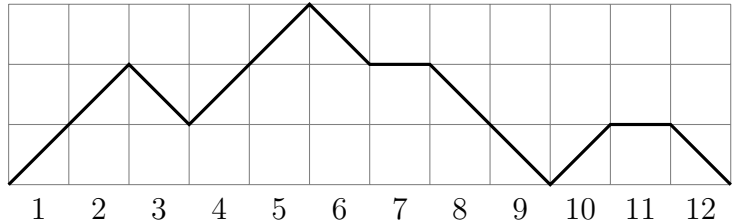
This correspondence is one-to-one and maps the number of loops and the number
of non-loop arcs of $P$ to the number of horizontal steps and the number of up steps of $M$,
respectively. The result then follows from the fact that $\gamma_{n+1,n-i}$ is the number of
Motzkin paths of length $n$ with $n-i$ up steps. 
\end{proof}

It is possible to deduce identity~\eqref{eq:t-CM} from Theorem~\ref{th:f} and the 
palindromity of the Narayana polynomials. Let $\arc(P)$ (resp.~$\arc_{2}(P)$) be the 
number of arcs (resp.~non-loop arcs) in the arc diagram of partition $P$. It is clear 
that $|P|=n-\arc_2(P)$ for any $P\in\Pi_n$, where $|P|$ denotes the number of blocks
of~$P$. Let $\one(F)$ be the number of $1$'s in a $01$-filling~$F$. By the palindromity 
of $C_{n+1}(t)$ and the constructions of bijections $\mathcal{C},\mathcal{E}$ and $f$,
we have 
\begin{align*}
C_{n+1}(t)&=\sum_{P\in\NC_{n+1}^{(2)}}t^{|P|}=\sum_{P\in\NC_{n+1}^{(2)}}t^{n+2-|P|}=
\sum_{P\in\NC_{n+1}^{(2)}}t^{\arc_2(P)+1}=\\
&=t\sum_{F\in\mathcal{C}_n^{(2)}}t^{\one(F)}=
t\sum_{i=0}^n{n\choose i}\sum_{F\in\mathcal{E}_i^{(2)}}t^{\one(F)}=\\
&=t\sum_{i=0}^n{n\choose i}\sum_{P\in\NW_i^{(2)}}t^{\arc(P)}=
t\sum_{i=0}^n{n\choose i}\sum_{P\in\NW_i^{(2)}}t^{|P|},
\end{align*}
where the last equality follows from the fact that $\arc(P)=|P|$ for any
$P\in\NW_i^{(2)}$. This shows identity~\eqref{eq:t-CM}, as desired. 

In general, we have $\arc(P)\geq|P|$ for any partition $P$ and $\NC_{n+1}^{(k)}(t)$ is not 
palindromic anymore for $k\geq3$. This is the reason why we could not prove
identity~\eqref{eq:NCNW} for $k\geq3$ by utilizing Theorem~\ref{th:f}. However, we have 
been able to prove~\eqref{eq:NCNW} via an intriguing bijection on $k$-noncrossing 
partitions which we shall introduce next. 
\section{Bijective proof of~\eqref{eq:NCNW}}\label{NCNW}
This section forms the most interesting part of our paper. We will first illustrate our 
bijective proof of~\eqref{eq:NCNW} for noncrossing partitions and then extend it to all
$k$-noncrossing partitions. The extension of our construction from $k=2$ to general $k$ is 
highly nontrivial. We believe it is better to show our framework for the noncrossing 
partition case first, although one can skip Section~\ref{cross2} and go to
Section~\ref{main:k-cross} directly. 

In this section, we let $\Pi_n$ denote the set of partitions of $\{0,1,\ldots,n-1\}$ 
rather than partitions of $[n]$, for convenience's sake. 
\subsection{Noncrossing partitions}\label{cross2}
We give a combinatorial interpretation of identity~\eqref{eq:t-CM}. 

First we interpret the sum $t\sum_{i=0}^n\bi{n}{i}M_i(t)$ as the generating function of
all pairs $(A,\mu)$ such that $A$ is a subset of $\{1,2,\dots,n\}$ and $\mu$ is
a noncrossing (partial) matching whose nodes are elements of $A$ placed on the line
in the natural order. A pair $(A,\mu)$ is weighted by $t^{|\mu|+1}$, where $|\mu|$ is the 
number of blocks of $\mu$. If $A$ is the empty set, then $\mu$ is the empty matching
with weight $t$. 

We now define a combinatorial bijection $\Psi$ from noncrossing partitions in $\Pi_{n+1}$ 
to the set of all pairs $(A,\mu)$ in the above. As it is often the case, an example will do.
Let $n=10$ and
$$
\pi=\{\{0,8,10\},\{1,2,7\},\{3,5,6\},\{4\},\{9\}\}.
$$
This $\pi$ is a noncrossing partition as can be seen below:
\begin{center}
\begin{tikzpicture}
\SetVertexNormal
\SetGraphUnit{1.3}
\tikzset{VertexStyle/.append style={inner sep=0pt,minimum size=5mm}}
\Vertex{0}
\EA(0){1}\EA(1){2}\EA(2){3}\EA(3){4}\EA(4){5}\EA(5){6}\EA(6){7}
\EA(7){8}\EA(8){9}\EA(9){10}
\tikzset{EdgeStyle/.append style = {bend left = 60}}
\Loop[dist=.8cm, dir=EA](4.north)
\Loop[dist=.8cm, dir=EA](9.north)
\Edge(0)(8)\Edge(8)(10)
\Edge(1)(2)\Edge(2)(7)
\Edge(3)(5)\Edge(5)(6)
\end{tikzpicture}
\end{center}

Consider all blocks in $\pi$ which do not contain $0$:
$\{\{1,2,7\},\{3,5,6\},\{4\},\{9\}\}$.
From each block, delete all integers which are neither the smallest nor
the largest in the block.
Let the resulting set be $\mu$, and let $A$ be the union of all blocks in $\mu$:
$$
(A,\mu)=(\{1,3,4,6,7,9\},\{\{1,7\},\{3,6\},\{4\},\{9\}\})
$$
The next figure shows the elements of $A$, in blue, and the (partial) matching $\mu$.
\begin{center}
\begin{tikzpicture}
\SetVertexNormal
\SetGraphUnit{1.3}
\tikzset{VertexStyle/.append style={inner sep=0pt,minimum size=5mm}}
\Vertex{0}
{\tikzset{VertexStyle/.append style={fill=blue!30}}
\EA(0){1}}
\EA(1){2}
{\tikzset{VertexStyle/.append style={fill=blue!30}}
\EA(2){3}\EA(3){4}}
\EA(4){5}
{\tikzset{VertexStyle/.append style={fill=blue!30}}
\EA(5){6}\EA(6){7}}
\EA(7){8}
{\tikzset{VertexStyle/.append style={fill=blue!30}}
\EA(8){9}}
\EA(9){10}
\tikzset{EdgeStyle/.append style = {bend left = 60}}
\Loop[dist=.8cm, dir=EA](4.north)
\Loop[dist=.8cm, dir=EA](9.north)
\Edge(1)(7)
\Edge(3)(6)
\end{tikzpicture}
\end{center}
Let $\Psi(\pi)=(A,\mu)$. Clearly, this is weight-preserving.

The above procedure is reversible. Let $(A,\mu)$ be a pair such that
$A$ is a subset of $\{1,2,\dots,n\}$ and $\mu$ is a noncrossing (partial) matching
whose nodes are elements of $A$ placed on the line in the natural order.

We will construct the corresponding partition $\pi$ of $\{0,1,2,\dots,n\}$ as follows.
Interpret each block $\beta$ in $\mu$ as an interval
$I(\beta)=\{i:\min\{\beta\}\leq i\leq\max\{\beta\}\}$. Let the block of $\pi$ containing $0$ be
$$
\{0,1,2,\dots,n\}\setminus\cup_{\beta\in\mu}I(\beta).
$$
As an example, let $n=10$ and $(A,\mu)=(\{1,3,4,6,7,9\},\{\{1,7\},\{3,6\},\{4\},\{9\}\})$.
The block containing $0$ is $\{0,8,10\}$, shown in red below.
\begin{center}
\begin{tikzpicture}
\SetVertexNormal
\SetGraphUnit{1.3}
\tikzset{VertexStyle/.append style={inner sep=0pt,minimum size=5mm}}

{\tikzset{VertexStyle/.append style={fill=red!30}}\Vertex{0}}
{\tikzset{VertexStyle/.append style={fill=blue!30}}
\EA(0){1}}
\EA(1){2}
{\tikzset{VertexStyle/.append style={fill=blue!30}}
\EA(2){3}\EA(3){4}}
\EA(4){5}
{\tikzset{VertexStyle/.append style={fill=blue!30}}
\EA(5){6}\EA(6){7}}
{\tikzset{VertexStyle/.append style={fill=red!30}}
\EA(7){8}}
{\tikzset{VertexStyle/.append style={fill=blue!30}}
\EA(8){9}}
{\tikzset{VertexStyle/.append style={fill=red!30}}
\EA(9){10}}
\tikzset{EdgeStyle/.append style = {bend left = 60}}
\Loop[dist=.8cm, dir=EA](4.north)
\Loop[dist=.8cm, dir=EA](9.north)
{\tikzset{EdgeStyle/.append style={color=red}}
\Edge(0)(8)\Edge(8)(10)}
\Edge(1)(7)
\Edge(3)(6)
\end{tikzpicture}
\end{center}
Other blocks of $\pi$ are obtained by extending blocks in $\mu$
by the rule:
\begin{quote}
$i\in\{1,2,\dots,n\}\setminus A$ belongs to the block originating
from a block $\beta\in\mu$ if $I(\beta)$ is the smallest interval containing~$i$.
\end{quote}

In our example, two blocks $\{1,7\}$ and $\{3,6\}$ are enlarged, shown in blue below.
\begin{center}
\begin{tikzpicture}
\SetVertexNormal
\SetGraphUnit{1.3}
\tikzset{VertexStyle/.append style={inner sep=0pt,minimum size=5mm}}
{\tikzset{VertexStyle/.append style={fill=red!30}}\Vertex{0}}
{\tikzset{VertexStyle/.append style={fill=blue!30}}
\EA(0){1}}
\EA(1){2}
{\tikzset{VertexStyle/.append style={fill=blue!30}}
\EA(2){3}\EA(3){4}}
\EA(4){5}
{\tikzset{VertexStyle/.append style={fill=blue!30}}
\EA(5){6}\EA(6){7}}
{\tikzset{VertexStyle/.append style={fill=red!30}}
\EA(7){8}
}
{\tikzset{VertexStyle/.append style={fill=blue!30}}
\EA(8){9}}
{\tikzset{VertexStyle/.append style={fill=red!30}}
\EA(9){10}}
\tikzset{EdgeStyle/.append style = {bend left = 60}}
\Loop[dist=.8cm, dir=EA](4.north)
\Loop[dist=.8cm, dir=EA](9.north)
{\tikzset{EdgeStyle/.append style={color=red}}
\Edge(0)(8)\Edge(8)(10)}
{\tikzset{EdgeStyle/.append style={color=blue}}
\Edge(1)(2)\Edge(2)(7)
\Edge(3)(5)\Edge(5)(6)}
\end{tikzpicture}
\end{center}
So $\Psi^{-1}(\pi)=\{\{0,8,10\},\{1,2,7\},\{3,5,6\},\{4\},\{9\}\}$.

\subsection{$k$-crossing and weak $k$-crossing}
\label{main:k-cross}
Since the block containing $0$ is important in our discussion, we fix the following 
terminology. Recall that the arc diagram of a partition has been defined in Definition~\ref{ArcDiagram}.
\begin{Def}[Red block, Colored arc diagram]
In a partition $P$, the block containing~$0$ is called a {\bf red block}, denoted by
$\redd(P)$, and other blocks are called {\bf black blocks}. The elements in $\redd(P)$ 
are colored red, and other elements are colored black. Arcs in the arc diagram of $P$ between 
red elements are colored red and other arcs are colored black. Such a colored version of
the arc diagram of $P$ is called the {\bf colored arc diagram}, denoted by $D(P)$.

A (weak) $k$-crossing is called a {\bf black (weak) $k$-crossing},
if all its arcs are black; a {\bf red (weak) $k$-crossing}, otherwise.
A partition is called {\bf $k$-crossing} if it has at least one $k$-crossing.
\end{Def}

Recall that $\NC_n^{(k)}$ is the set of all $k$-noncrossing partitions in $\Pi_n$ and 
$\NW_n^{(k)}$ is the set of all enhanced $k$-noncrossing partitions in $\Pi_n$.
Let $\NB_n^{(k)}$ be the set of all partitions $P$ in $\Pi_n$
whose colored arc diagram, $D(P)$, has neither black $k$-crossings nor black weak
$k$-crossings, i.e., has no black enhanced $k$-crossings.

For any subset $A$ of $\{1,2,\dots,n-1\}$, define a subset $\Pi_A$ of $\Pi_n$ by
$$
\Pi_A=\{P\in\Pi_{n}:\redd(P)=\{0,1,\dots,n-1\}\setminus A\}.
$$
Note that $\Pi_n$ is partitioned into $\{\Pi_A\}_{A\subseteq\{1,2,\dots,n-1\}}$,
and there is a natural correspondence between $\Pi_A$ and $\Pi_{|A|}$:
if $A=\{a_1<a_2<\cdots<a_l\}$, then the correspondence is obtained by mapping $a_i$
to $i-1$ for each $i$. 
This correspondence reduces the number of blocks by $1$, since the red block is ignored.
Let us define a subset $\NB_A^{(k)}$ of $\NB_n^{(k)}$ by
$$
\NB_A^{(k)}=\Pi_A\cap \NB_n^{(k)}.
$$
We can see that $\NB_n^{(k)}$ is partitioned into $\{\NB_A^{(k)}\}_{A\subseteq\{1,2,\dots,n-1\}}$,
and there is a natural correspondence between $\NB_A^{(k)}$ and $\NW_{|A|}^{(k)}$, i.e.,
the restriction of the natural correspondence between $\Pi_A$ and $\Pi_{|A|}$.

Define a weight function $w$ on $\Pi_n$ by $w(P)=t^{|P|}$ for each $P\in\Pi_n$,
where $|P|$ denotes the number of blocks in $P$. Since we have
$$
\sum_{P\in \NC_{n+1}^{(k)}}w(P)=\NC_{n+1}^{(k)}(t)
$$
and
\begin{eqnarray*}
\sum_{P\in \NB_{n+1}^{(k)}}w(P)
&=&\sum_{A\subseteq\{1,\dots,n\}}\sum_{P\in \NB_A^{(k)}}w(P)\\
&=&\sum_{A\subseteq\{1,\dots,n\}}t\sum_{P\in \NW_{|A|}^{(k)}}w(P)\\
&=&t\sum_{i=0}^n\bi{n}{i}\NW_{i}^{(k)}(t),
\end{eqnarray*}
identity~\eqref{eq:NCNW} is equivalent to the following theorem.
\begin{theorem}\label{thm:NCCNW}
For all $n$ and $k$, there exists a weight-preserving combinatorial bijection 
$
\Phi:\NB_{n+1}^{(k)}\to \NC_{n+1}^{(k)}
$ 
proving
\begin{equation*}\label{eq:NCCNW}
\sum_{P\in \NB_{n+1}^{(k)}}w(P)=\sum_{P\in \NC_{n+1}^{(k)}}w(P).
\end{equation*}
\end{theorem}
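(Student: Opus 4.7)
The plan is to build the bijection $\Phi: \NB_{n+1}^{(k)} \to \NC_{n+1}^{(k)}$ by generalizing the $k=2$ construction $\Psi^{-1}$ from Section~\ref{cross2}. Given $P \in \NB_{n+1}^{(k)}$, the canonical decomposition sends $P$ to the pair $(R,\mu)$, where $R = \redd(P)$ is the red block and $\mu$ is the black part of $P$, viewed as an element of $\NW_{|A|}^{(k)}$ via the correspondence $\NB_A^{(k)} \cong \NW_{|A|}^{(k)}$ set up right before the theorem, with $A = \{0,1,\ldots,n\}\setminus R$. I would then define $\Phi(P)$ by an explicit merging rule that redistributes the elements of $R\setminus\{0\}$ among the black blocks of $\mu$, keeping a single block (containing $0$) as the new red block of $\Phi(P)$. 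Weight-preservation will then be automatic: each black block of $\mu$ yields exactly one block of $\Phi(P)$ and $R$ contracts to one block, so $|P| = |\Phi(P)|$.

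The heart of the argument is the merging rule. For $k=2$, $\Psi^{-1}$ assigns each red element $i$ to the black block $\beta$ whose interval $I(\beta) = [\min\beta,\max\beta]$ is the smallest containing $i$. I would first test this naive rule for general $k$; I anticipate it will not always land in $\NC_{n+1}^{(k)}$, because for $k\ge 3$ the black blocks of $\mu$ may themselves cross and nest, so "smallest enclosing interval" is no longer unambiguous. The refinement should therefore track not merely which block $\beta$ absorbs $i$ but also into which gap $(b_j,b_{j+1})$ of $\beta$ the element $i$ is inserted, making local choices that respect the arc structure of $\mu$ around~$i$. These are the two "well-designed operations on $k$-noncrossing partitions" promised in the introduction.

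The main obstacle is verifying that $\Phi$ actually lands in $\NC_{n+1}^{(k)}$, and dually that its inverse lands in $\NB_{n+1}^{(k)}$. The strategy is a lifting argument: given a putative $k$-crossing of $\Phi(P)$, contract each of its $k$ arcs back to the corresponding arc of the underlying $\mu$, and show that the resulting configuration is either a strict or a weak $k$-crossing among the black arcs of $P$ — equivalently, a black enhanced $k$-crossing — contradicting $P \in \NB_{n+1}^{(k)}$. This is precisely why the \emph{enhanced} version must be forbidden on the domain side: a weak $k$-crossing in $\mu$, where two arcs of a single black block share an endpoint $b$, lifts to a strict $k$-crossing in $\Phi(P)$ as soon as a red element is absorbed between the two incident arcs at~$b$. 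Symmetrically, for the inverse direction one must designate the block of $\pi\in\NC_{n+1}^{(k)}$ that plays the role of $R$ (the block containing $0$ is the natural candidate) and then contract the remaining blocks to extract $\mu$; the lifting argument applied in the reverse direction shows that any enhanced $k$-crossing among the resulting black arcs would lift to a $k$-crossing of $\pi$. Once these two dual lifting lemmas are in place, $\Phi$ and its inverse are mutually inverse and weight-preserving, completing the proof.
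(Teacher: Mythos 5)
Your proposal is a strategy outline rather than a proof, and the missing pieces sit exactly where the real work lies. First, the central object --- the merging rule for general $k$ --- is never defined: you observe that the smallest-enclosing-interval rule of $\Psi^{-1}$ should fail for $k\geq 3$ and say the refinement ``should track into which gap of $\beta$ the element is inserted, making local choices that respect the arc structure,'' but no rule is written down, so neither of your two lifting lemmas can even be stated precisely, let alone proved. Second, the lifting argument you sketch only controls $k$-crossings of $\Phi(P)$ all of whose arcs contract back to \emph{black} arcs of $P$. In the form of $\Phi(P)$ you propose, the new red block $R'\subseteq R$ contributes arcs obtained by contracting red arcs of $P$, and a $k$-crossing of $\Phi(P)$ that uses such an arc lifts to a configuration containing red arcs --- which is not forbidden by $P\in\NB_{n+1}^{(k)}$, since membership in $\NB_{n+1}^{(k)}$ excludes only black enhanced $k$-crossings. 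Ruling out these mixed red/black crossings in the image is precisely the hard part of the theorem; in the paper it is what the `cyclic rotation' step and Lemma~\ref{cyc_rotation} are for, and it cannot be disposed of by the contraction argument you describe. Third, your claim that weight preservation is ``automatic'' rests on the unproven assumption that $\Phi$ can be chosen so that every black block of $P$ survives with its elements intact, merely absorbing some red elements; you give no evidence that a bijection of this restricted form exists for $k\geq3$.

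For comparison, the paper's $\Phi$ does not have that form at all: it is built by iterating two operations on the colored arc diagram --- an `enhanced left shift' that turns the smallest red node under an innermost black $(k-1)$-crossing into the center of a black weak $k$-crossing, followed by `cyclic rotations' that remove red $k$-crossings --- and these operations move elements between black blocks and between the red and black blocks (in the paper's worked example the black block $\{6,13\}$ of $P$ becomes $\{6,10\}$ in $\Phi(P)$, while $15$ leaves the red block and $13$ enters it). Correctness rests on the two technical Lemmas~\ref{shift} and~\ref{cyc_rotation}, which show that neither operation ever creates a black $k$-crossing, together with an explicitly described inverse that undoes the two operations; weight preservation is checked operation by operation rather than being structural. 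Until you specify your insertion rule and, in particular, explain how to exclude $k$-crossings involving the contracted red block, the proposal does not constitute a proof.
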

\subsection{The combinatorial bijection $\Phi$}
Recall that a weak $k$-crossing of $P\in\Pi_n$ is a $k$-subset
$(i_1,j_1),(i_2,j_2),\ldots,(i_k,j_k)$ of arcs such that 
$$
i_1<i_2<\cdots<\blue{i_k= j_1}<j_2<\cdots<j_k.
$$
The position $c$, $c=i_k=j_1$, is called the {\em center} of the weak $k$-crossing.
We will say that a node~$a$ is {\bf under a $k$-crossing}
$(i_1,j_1),(i_2,j_2),\ldots,(i_k,j_k)$ if $i_k<a<j_1$, i.e.,
$$
i_1<i_2<\cdots<\blue{i_k<a<j_1}<j_2<\cdots<j_k.
$$

Since a $(k-1)$-noncrossing partition has no enhanced $k$-crossings and
an enhanced $k$-noncrossing partition has no $k$-crossings,
$$
\NC_n^{(k-1)}\subseteq \NW_n^{(k)}\subseteq \NC_n^{(k)}
$$
for all $k\geq3$. The combinatorial bijection 
$$
\Phi:\NB_{n+1}^{(k)}\to \NC_{n+1}^{(k)}
$$
is constructed by the following steps:
\begin{enumerate}
\item Let $P=\{B_0,B_1,\dots,B_l\}\in \NB_{n+1}^{(k)}$ with $\redd(P)=B_0$.
If $P\in\NB_{n+1}^{(k-1)}$ then $P$ belongs to $\NC_{n+1}^{(k)}$ and we can
set $\Phi(P)=P$. Otherwise, 
$P\in\NB_{n+1}^{(k)}\setminus\NB_{n+1}^{(k-1)}$.
\item Start with $D(P)$, the colored arc diagram of $P$.
\item If there exists a red node under a black $(k-1)$-crossing in $D(P)$,
do {\bf `enhanced left shift'} on $D(P)$, i.e.,
\begin{itemize}
\item let $a$ be the {\bf smallest} such red node,
\item let $(i_1,j_1),(i_2,j_2),\ldots,(i_{k-1},j_{k-1})$ be the {\bf innermost}
(that is to say the word $(j_1,j_2,\ldots, j_{k-1})$ is smallest in the lexicographic order)
black $(k-1)$-crossing covering~$a$,
\item change arcs forming a black $(k-1)$-crossing
$(i_1,j_1),(i_2,j_2),\ldots,(i_{k-1},j_{k-1})$
into arcs of a black weak $k$-crossing
$(i_1,a),(i_2,j_1),\ldots,(i_{k-1},j_{k-2}),(a,j_{k-1})$
with $a$ as the center,
\item set $B_0=B_0\setminus\{a\}$, and let $\tilde{P}$ denote the resulting partition.
\end{itemize}
Repeat this step until the colored arc diagram of $\tilde{P}$ has no red node under
a black $(k-1)$-crossing. The resulting partition $\tilde{P}$ has no black $k$-crossing,
which is ensured by Lemma~\ref{shift}. 

\item If $D(\tilde{P})$ has no red $k$-crossing, then set $\Phi(P)=\tilde{P}$; otherwise,
do {\bf`cyclic rotation'} on $D(\tilde{P})$, i.e.,
\begin{itemize}
\item find the {\bf rightmost} red arc in a $k$-crossing, say \red{$(i,j)$},
\item let $(i_1,j_1),(i_2,j_2),\ldots,(i_{k},j_{k})$ be the {\bf greatest}, in the
lexicographic order of $(j_1,j_2,\dots,j_k)$,
$k$-crossing with \red{$(i_p,j_p)=(i,j)$} (here $p$ is always greater than $1$ in the process).
\item change arcs forming a $k$-crossing
$(i_1,j_1),(i_2,j_2),\ldots,\red{(i_p,j_p)},\ldots,(i_{k},j_{k})$ into arcs
$$
(i_1,j_2),(i_2,j_3),\ldots,(i_{p-1},j_p),\red{(i_p,j_1)},(i_{p+1},j_{p+1}),\ldots,
(i_k,j_k),
$$
where $(i_p,j_1)$ is recolored red,
\item color $j_p$ black, $j_1$ red, and recolor the nodes in the blocks containing
$j_p$ and $j_1$ accordingly. 
\end{itemize}
Repeat this process until the resulting colored arc diagram has no red $k$-crossing.
The partition $P'$ corresponding to the resulting colored arc diagram has no black
$k$-crossing, which is proved in Lemma~\ref{cyc_rotation}.
\item Finally we end up with a partition $P'$ in $\NC_{n+1}^{(k)}$. Set $\Phi(P)=P'$.
\end{enumerate}
Note that if $P\in \NB_{n+1}^{(k)}$ has a $(k-1)$-crossing then so does $\Phi(P)$ but
the converse is not true. The reader is invited to check that $\Phi$ agrees with $\Psi^{-1}$
when $k=2$, even though they are defined differently.
\begin{example}
An example of $\Phi$ with $(n,k)=(16,3)$ and $P\in\NB_{17}^{(3)}$:
$$
P=\{\{0,4,8,15\},\{1,3,10\},\{2,11\},\{5,16\},\{6,13\},\{7,9,12,14\}\}.
$$
\begin{center}
\begin{tikzpicture}
\SetVertexNormal
\SetGraphUnit{\graphsize}
\tikzset{VertexStyle/.append style={inner sep=0pt,minimum size=4.5mm}}
{\tikzset{VertexStyle/.append style={fill=red!30}}\Vertex{0}}
\foreach \1/\2 in {0/1,1/2,2/3,3/4,4/5,5/6,6/7,7/8,8/9,9/10,10/11,11/12,12/13,13/14,14/15,15/16}{\EA(\1){\2}}
\tikzset{VertexStyle/.append style={fill=red!30}}
\foreach \1/\2 in {3/4,7/8,14/15}{\EA(\1){\2}}
\tikzset{VertexStyle/.append style={fill=blue!30}}
\foreach \1/\2 in {}{\EA(\1){\2}}
\tikzset{VertexStyle/.append style={fill=green!30}}
\foreach \1/\2 in {}{\EA(\1){\2}}
\tikzset{EdgeStyle/.append style={bend left=60}}
\foreach \1/\2 in {1/3,2/11,5/16,3/10,6/13,9/12,12/14,7/9}{\Edge(\1)(\2)}
\tikzset{EdgeStyle/.append style={color=red}}
\foreach \1/\2 in {0/4,4/8,8/15}{\Edge(\1)(\2)}
\tikzset{EdgeStyle/.append style={color=blue}}
\foreach \1/\2 in {}{\Edge(\1)(\2)}
\tikzset{EdgeStyle/.append style={color=green}}
\foreach \1/\2 in {}{\Edge(\1)(\2)}
\end{tikzpicture}
\end{center}
Red $8$ is under four black $2$-crossings of which the innermost is
$(3,10),(6,13)$. Make $8$ the center of a black weak $3$-crossing,
$(3,8),(6,10),(8,13)$, and uncolor~$8$.
\vspace{-2mm}
\begin{center}
\begin{tikzpicture}
\SetVertexNormal
\SetGraphUnit{\graphsize}
\tikzset{VertexStyle/.append style={inner sep=0pt,minimum size=4.5mm}}
{\tikzset{VertexStyle/.append style={fill=red!30}}\Vertex{0}}
\foreach \1/\2 in {0/1,1/2,2/3,3/4,4/5,5/6,6/7,7/8,8/9,9/10,10/11,11/12,12/13,13/14,14/15,15/16}{\EA(\1){\2}}
\tikzset{VertexStyle/.append style={fill=red!30}}
\foreach \1/\2 in {3/4,14/15}{\EA(\1){\2}}
\tikzset{VertexStyle/.append style={fill=blue!30}}
\foreach \1/\2 in {}{\EA(\1){\2}}
\tikzset{VertexStyle/.append style={fill=green!30}}
\foreach \1/\2 in {}{\EA(\1){\2}}
\tikzset{EdgeStyle/.append style={bend left=60}}
\foreach \1/\2 in {1/3,2/11,5/16,9/12,12/14,7/9}{\Edge(\1)(\2)}
\tikzset{EdgeStyle/.append style={color=red}}
\foreach \1/\2 in {0/4,4/15}{\Edge(\1)(\2)}
{\tikzset{EdgeStyle/.append style={color=black,dashed}}
\foreach \1/\2 in {3/8,8/13,6/10}{\Edge(\1)(\2)}}
\tikzset{EdgeStyle/.append style={color=green}}
\foreach \1/\2 in {}{\Edge(\1)(\2)}
\end{tikzpicture}
\end{center}
Dashed arcs form the weak $3$-crossing.
Arcs $(2,11),\red{(4,15)},(5,16)$ form a red $3$-crossing. Do `cyclic rotation':
$(2,11),\red{(4,15)},(5,16)\to (2,15),\red{(4,11)},(5,16)$.
\vspace{-2mm}
\begin{center}
\begin{tikzpicture}
\SetVertexNormal
\SetGraphUnit{\graphsize}
\tikzset{VertexStyle/.append style={inner sep=0pt,minimum size=4.5mm}}
{\tikzset{VertexStyle/.append style={fill=red!30}}\Vertex{0}}
\foreach \1/\2 in {0/1,1/2,2/3,3/4,4/5,5/6,6/7,7/8,8/9,9/10,10/11,11/12,12/13,13/14,14/15,15/16}{\EA(\1){\2}}
\tikzset{VertexStyle/.append style={fill=red!30}}
\foreach \1/\2 in {3/4,10/11}{\EA(\1){\2}}
\tikzset{VertexStyle/.append style={fill=blue!30}}
\foreach \1/\2 in {}{\EA(\1){\2}}
\tikzset{VertexStyle/.append style={fill=green!30}}
\foreach \1/\2 in {}{\EA(\1){\2}}
\tikzset{EdgeStyle/.append style={bend left=60}}
\foreach \1/\2 in {1/3,2/15,5/16,9/12,12/14,7/9,3/8,8/13,6/10}{\Edge(\1)(\2)}
\tikzset{EdgeStyle/.append style={color=red}}
\foreach \1/\2 in {0/4,4/11}{\Edge(\1)(\2)}
\tikzset{EdgeStyle/.append style={color=blue}}
\foreach \1/\2 in {}{\Edge(\1)(\2)}
\tikzset{EdgeStyle/.append style={color=green}}
\foreach \1/\2 in {}{\Edge(\1)(\2)}
\end{tikzpicture}
\end{center}
Arcs $(3,8),\red{(4,11)},(5,16)$ form a red $3$-crossing. Do `cyclic rotation':
$(3,8),\red{(4,11)},(5,16)\to (3,11),\red{(4,8)},(5,16)$.
\vspace{-2mm}
\begin{center}
\begin{tikzpicture}
\SetVertexNormal
\SetGraphUnit{\graphsize}
\tikzset{VertexStyle/.append style={inner sep=0pt,minimum size=4.5mm}}
{\tikzset{VertexStyle/.append style={fill=red!30}}\Vertex{0}}
\foreach \1/\2 in {0/1,1/2,2/3,3/4,4/5,5/6,6/7,7/8,8/9,9/10,10/11,11/12,12/13,13/14,14/15,15/16}{\EA(\1){\2}}
\tikzset{VertexStyle/.append style={fill=red!30}}
\foreach \1/\2 in {3/4,7/8,12/13}{\EA(\1){\2}}
\tikzset{VertexStyle/.append style={fill=blue!30}}
\foreach \1/\2 in {}{\EA(\1){\2}}
\tikzset{VertexStyle/.append style={fill=green!30}}
\foreach \1/\2 in {}{\EA(\1){\2}}
\tikzset{EdgeStyle/.append style={bend left=60}}
\foreach \1/\2 in {1/3,2/15,3/11,5/16,9/12,12/14,7/9,6/10}{\Edge(\1)(\2)}
\tikzset{EdgeStyle/.append style={color=red}}
\foreach \1/\2 in {0/4,4/8,8/13}{\Edge(\1)(\2)}
\tikzset{EdgeStyle/.append style={color=blue}}
\foreach \1/\2 in {}{\Edge(\1)(\2)}
\tikzset{EdgeStyle/.append style={color=green}}
\foreach \1/\2 in {}{\Edge(\1)(\2)}
\end{tikzpicture}
\end{center}
The last colored arc diagram corresponds to $\Phi(P)\in\NC_{17}^{(3)}$:
$$
\Phi(P)=(\{0,4,8,13\},\{1,3,11\},\{2,15\},\{5,16\},\{6,10\},\{7,9,12,14\}).
$$
\end{example}

The crucial reason why $\Phi$ is reversible is that any `cyclic rotation' to a red $k$-crossing
leaves a trace, i.e., a red node under a black $(k-1)$-crossing.
In the following, we show that $\Phi$ is a bijection by defining its inverse explicitly:
$$
\Phi^{-1}:\NC_{n+1}^{(k)}\to \NB_{n+1}^{(k)}. 
$$
\begin{enumerate}
\item Let $P=\{B_0,B_1,\dots,B_l\}\in \NC_{n+1}^{(k)}$ with $\redd(P)=B_0$.
If $P\in\NB_{n+1}^{(k-1)}$ then set $\Phi^{-1}(P)=P$.
Otherwise, $P\in\NC_{n+1}^{(k)}\setminus\NB_{n+1}^{(k-1)}$.
\item Start with $D(P)$, the colored arc diagram of $P$.
\item If there is a red node under a black $(k-1)$-crossing in $D(P)$, then undo {\bf `cyclic rotation'},
i.e.,
\begin{itemize}
\item let $a$ be the {\bf smallest} such node and $a'$ the largest node among nodes in $\redd(P)$ 
which are smaller than $a$,
\item let $(i_1,j_1),(i_2,j_2),\ldots,(i_{k-1},j_{k-1})$ be the innermost black $(k-1)$-crossing with
$$
i_1<i_2<\cdots<i_{k-1}<a<j_1<j_2<\cdots<j_{k-1},
$$
and find $t$ such that $1\leq t<k-1$ and $i_t<a'<i_{t+1}$,
\item change arcs
$$
(i_1,j_1),(i_2,j_2),\ldots,(i_t,j_t),\red{(a',a)},(i_{t+1},j_{t+1}),\ldots,(i_{k-1},j_{k-1})
$$
into arcs
$$
(i_1,a),(i_2,j_1),\ldots,(i_t,j_{t-1}),\red{(a',j_t)},(i_{t+1},j_{t+1}),\ldots,(i_{k-1},j_{k-1}),
$$
and adjust the colors of nodes and arcs, of the resulting diagram so that the block 
containing $0$ is red and other blocks are black, and let the corresponding partition be~$P'$. 
\end{itemize}
Repeat this step with $D(P')$ until the colored arc diagram of the resulting partition
has no red node under black $(k-1)$-crossings.
\item If $D(P')$ has no black weak $k$-crossing, then set $\Phi^{-1}(P)=P'$;
otherwise,
\begin{itemize}
\item let $a$ be the {\bf largest} node among the centers of black weak $k$-crossings
of $D(P')$,
\item change arcs of the {\bf outermost} black weak $k$-crossing with center $a$
$$
(i_1,a),(i_2,j_1),\ldots,(i_{k-1},j_{k-2}),(a,j_{k-1})
$$
into arcs of a $(k-1)$-crossing
$$
(i_1,j_1),(i_2,j_2),\ldots,(i_{k-1},j_{k-1}),
$$
set $B_0=B_0\cup\{a\}$, and let $\tilde{P}$ denote the resulting partition.
\end{itemize}
Repeat this step until the resulting partition has no black weak $k$-crossings.
\item Finally we end up with a partition $\tilde{P}\in\NB_{n+1}^{(k)}$. Set $\Phi^{-1}(P)=\tilde{P}$.
\end{enumerate}
\begin{example}
An example of $\Phi^{-1}$ with $(n,k)=(16,4)$ and $P\in\NC_{17}^{(4)}$:
$$
P=\{\{0,3,5,10,13\},\{1,6,8,12\},\{2,9,15\},\{4,11,16\},\{7,14\}\}.
$$
\begin{center}
\begin{tikzpicture}
\SetVertexNormal
\SetGraphUnit{\graphsize}
\tikzset{VertexStyle/.append style={inner sep=0pt,minimum size=4.5mm}}
{\tikzset{VertexStyle/.append style={fill=red!30}}\Vertex{0}}
\foreach \1/\2 in {0/1,1/2,2/3,3/4,4/5,5/6,6/7,7/8,8/9,9/10,10/11,11/12,12/13,13/14,14/15,15/16}{\EA(\1){\2}}
\tikzset{VertexStyle/.append style={fill=red!30}}
\foreach \1/\2 in {2/3,4/5,9/10,12/13}{\EA(\1){\2}}
\tikzset{VertexStyle/.append style={fill=blue!30}}
\foreach \1/\2 in {}{\EA(\1){\2}}
\tikzset{VertexStyle/.append style={fill=green!30}}
\foreach \1/\2 in {}{\EA(\1){\2}}
\tikzset{EdgeStyle/.append style={bend left=60}}
\foreach \1/\2 in {1/6,2/9,4/11,7/14,11/16,6/8,8/12,9/15,11/16}{\Edge(\1)(\2)}
\tikzset{EdgeStyle/.append style={color=red}}
\foreach \1/\2 in {0/3,3/5,5/10,10/13}{\Edge(\1)(\2)}
\tikzset{EdgeStyle/.append style={color=blue}}
\foreach \1/\2 in {}{\Edge(\1)(\2)}
\tikzset{EdgeStyle/.append style={color=green}}
\foreach \1/\2 in {}{\Edge(\1)(\2)}
\end{tikzpicture}
\end{center}
Red $5$ is under a black $3$-crossing: $(1,6),(2,9),(4,11)$.
\begin{center}
\begin{tikzpicture}
\SetVertexNormal
\SetGraphUnit{\graphsize}
\tikzset{VertexStyle/.append style={inner sep=0pt,minimum size=4.5mm}}
{\tikzset{VertexStyle/.append style={fill=red!30}}\Vertex{0}}
\foreach \1/\2 in {0/1,1/2,2/3,3/4,4/5,5/6,6/7,7/8,8/9,9/10,10/11,11/12,12/13,13/14,14/15,15/16}{\EA(\1){\2}}
\tikzset{VertexStyle/.append style={fill=red!30}}
\foreach \1/\2 in {2/3,4/5,9/10,12/13}{\EA(\1){\2}}
\tikzset{VertexStyle/.append style={fill=blue!30}}
\foreach \1/\2 in {}{\EA(\1){\2}}
\tikzset{VertexStyle/.append style={fill=green!30}}
\foreach \1/\2 in {}{\EA(\1){\2}}
\tikzset{EdgeStyle/.append style={bend left=60}}
\foreach \1/\2 in {7/14,11/16,6/8,8/12,9/15,11/16}{\Edge(\1)(\2)}
\tikzset{EdgeStyle/.append style={color=red}}
\foreach \1/\2 in {0/3,5/10,10/13}{\Edge(\1)(\2)}
{\tikzset{EdgeStyle/.append style={color=black,dashed}}
\foreach \1/\2 in {1/6,2/9,4/11}{\Edge(\1)(\2)}}
{\tikzset{EdgeStyle/.append style={color=red,dotted}}
\foreach \1/\2 in {3/5}{\Edge(\1)(\2)}}
\end{tikzpicture}
\end{center}
Undo `cyclic rotation': $(1,6),(2,9),\red{(3,5)},(4,11)\to (1,5),(2,6),\red{(3,9)},(4,11)$.
\begin{center}
\begin{tikzpicture}
\SetVertexNormal
\SetGraphUnit{\graphsize}
\tikzset{VertexStyle/.append style={inner sep=0pt,minimum size=4.5mm}}
{\tikzset{VertexStyle/.append style={fill=red!30}}\Vertex{0}}
\foreach \1/\2 in {0/1,1/2,2/3,3/4,4/5,5/6,6/7,7/8,8/9,9/10,10/11,11/12,12/13,13/14,14/15,15/16}{\EA(\1){\2}}
\tikzset{VertexStyle/.append style={fill=red!30}}
\foreach \1/\2 in {2/3,8/9,14/15}{\EA(\1){\2}}
\tikzset{VertexStyle/.append style={fill=blue!30}}
\foreach \1/\2 in {}{\EA(\1){\2}}
\tikzset{VertexStyle/.append style={fill=green!30}}
\foreach \1/\2 in {}{\EA(\1){\2}}
\tikzset{EdgeStyle/.append style={bend left=60}}
\foreach \1/\2 in {5/10,7/14,11/16,6/8,8/12,11/16,10/13}{\Edge(\1)(\2)}
\tikzset{EdgeStyle/.append style={color=red}}
\foreach \1/\2 in {0/3,9/15}{\Edge(\1)(\2)}
{\tikzset{EdgeStyle/.append style={color=black,dashed}}
\foreach \1/\2 in {1/5,2/6,4/11}{\Edge(\1)(\2)}}
{\tikzset{EdgeStyle/.append style={color=red,dotted}}
\foreach \1/\2 in {3/9}{\Edge(\1)(\2)}}
\end{tikzpicture}
\end{center}
Arcs $(4,11),(8,12),(10,13),(11,16)$ form a black weak $4$-crossing. Change them to
form a black $3$-crossing $(4,12),(8,13),(10,16)$ and color $11$ red.
\begin{center}
\begin{tikzpicture}
\SetVertexNormal
\SetGraphUnit{\graphsize}
\tikzset{VertexStyle/.append style={inner sep=0pt,minimum size=4.5mm}}
{\tikzset{VertexStyle/.append style={fill=red!30}}\Vertex{0}}
\foreach \1/\2 in {0/1,1/2,2/3,3/4,4/5,5/6,6/7,7/8,8/9,9/10,10/11,11/12,12/13,13/14,14/15,15/16}{\EA(\1){\2}}
\tikzset{VertexStyle/.append style={fill=red!30}}
\foreach \1/\2 in {2/3,8/9,10/11,14/15}{\EA(\1){\2}}
\tikzset{VertexStyle/.append style={fill=blue!30}}
\foreach \1/\2 in {}{\EA(\1){\2}}
\tikzset{VertexStyle/.append style={fill=green!30}}
\foreach \1/\2 in {}{\EA(\1){\2}}
\tikzset{EdgeStyle/.append style={bend left=60}}
\foreach \1/\2 in {1/5,2/6,4/12,5/10,7/14,6/8,8/13,10/16}{\Edge(\1)(\2)}
\tikzset{EdgeStyle/.append style={color=red}}
\foreach \1/\2 in {0/3,3/9,9/11,11/15}{\Edge(\1)(\2)}
{\tikzset{EdgeStyle/.append style={color=black,dashed}}
\foreach \1/\2 in {}{\Edge(\1)(\2)}}
{\tikzset{EdgeStyle/.append style={color=red,dotted}}
\foreach \1/\2 in {}{\Edge(\1)(\2)}}
\end{tikzpicture}
\end{center}
The last colored arc diagram corresponds to $\Phi^{-1}(P)\in\NB_{17}^{(4)}$:
$$
\Phi^{-1}(P)=\{\{0,3,9,11,15\},\{1,5,10,16\},\{2,6,8,13\},\{4,12\},\{7,14\}\}.
$$
\end{example}
It can be checked routinely that $\Phi^{-1}(\Phi(P))=P$ for each $P\in \NB_{n+1}^{(k)}$, i.e., $\Phi$ is
injective. Therefore, $\Phi$ is bijective due to the cardinality reason in view of Theorem~\ref{th:f}.
Moreover, it is clear that each step of $\Phi$ preserves the number of blocks, which completes
the proof of Theorem~\ref{thm:NCCNW}. In the rest of this section, we will prove two technical
lemmas which insure that the algorithm $\Phi$ is well-defined. 
\subsection{Two technical lemmas}
\label{sec:techni}
For two arcs $\alpha=(i,j)$ and $\alpha'=(i',j')$ of a partition, it is convenient to use
the notation $\alpha\prec\alpha'$ whenever $i<i'$ and $j<j'$. Some parts of the discussion
below become more intuitive if one uses the representation~$\mathcal{C}$ (introduced in
Section~\ref{trian}), as $01$-fillings of triangular shapes, for set partitions.
Under the representation~$\mathcal{C}$, an arc is identified with a $1$, while a $k$-crossing
is identified with a proper SE-chain of length~$k$. 
\begin{lemma}\label{shift}
The `enhanced left shift' operation defined in the algorithm $\Phi$ does not create any black
$k$-crossing for a partition without any black $k$-crossing.
\end{lemma}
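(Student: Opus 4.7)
The plan is to argue by contradiction. Suppose $P\in\NB_{n+1}^{(k)}$ and that one application of the enhanced left shift produces a partition $\tilde{P}$ containing a black $k$-crossing $\mathcal{K}=\{B_1\prec B_2\prec\cdots\prec B_k\}$. Writing the new arcs of the weak $k$-crossing as $A_1=(i_1,a)$, $A_s=(i_s,j_{s-1})$ for $2\le s\le k-1$, and $A_k=(a,j_{k-1})$, we observe that, apart from the purely red adjustments near $a$, these are the only new black arcs of $\tilde{P}$; every other black arc of $\tilde{P}$ coincides with a black arc of $P$. Hence at least one arc of $\mathcal{K}$ is some $A_s$, and because $A_1$ and $A_k$ share the endpoint $a$ while the arcs of any $k$-crossing have pairwise distinct endpoints, at most one of $\{A_1,A_k\}$ lies in $\mathcal{K}$.

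It is convenient to work with the $01$-filling representation $\mathcal{C}$ from Section~\ref{trian}, under which a black $k$-crossing is precisely a proper SE-chain of length $k$ of black $1$'s. In these terms, the enhanced left shift deletes the $1$'s at the cells $O_s=(j_s-1,i_s)$ for $s=1,\ldots,k-1$ and inserts $1$'s at $P_0=(a-1,i_1)$, $P_s=(j_s-1,i_{s+1})$ for $1\le s\le k-2$, and $P_{k-1}=(j_{k-1}-1,a)$; each $P_s$ with $s\ge 1$ sits in the same row as $O_s$ but strictly east of it, while $P_0$ lies strictly north of $O_1$ in the column $i_1$. I would then case-split on which of the cells $P_0,\ldots,P_{k-1}$ are used by $\mathcal{K}$ and, for each new cell appearing, swap it for a neighbouring old cell: replace an interior $P_s$ by $O_s$ (a west shift) or by $O_{s+1}$ (a south shift), replace $P_0$ by $O_1$, and replace $P_{k-1}$ by $O_{k-1}$, with the direction dictated by the cell's neighbours in $\mathcal{K}$. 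When a consistent family of swaps preserves both the SE ordering and the properness $c_k\le r_1$ of the chain, one recovers a black $k$-crossing of $P$, contradicting $P\in\NB_{n+1}^{(k)}$.

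When no consistent family of swaps works, the obstruction itself is informative, and the three minimality choices built into the shift — that $P$ has no black $k$-crossing, that $a$ is the smallest red node under a black $(k-1)$-crossing, and that $(i_1,j_1),\ldots,(i_{k-1},j_{k-1})$ is the innermost such crossing — are precisely what is needed to close each case. For instance, if $A_k\in\mathcal{K}$ then the sub-chain of $\mathcal{K}$ preceding $A_k$, together with a suitable pre-existing arc of $P$, exhibits a black $(k-1)$-crossing of $P$ covering a red node strictly smaller than $a$, contradicting the minimality of $a$; and if some interior $A_s\in\mathcal{K}$ but no west swap preserves properness, then $\mathcal{K}$ itself yields a black $(k-1)$-crossing of $P$ that covers $a$ and whose right-endpoint sequence is strictly lex-smaller than $(j_1,\ldots,j_{k-1})$, contradicting the innermost choice. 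The principal obstacle is the bookkeeping when several of the $A_s$ appear in $\mathcal{K}$ simultaneously: the swaps must then be chosen coherently, and the boundary cells $P_0$ and $P_{k-1}$ require a separate treatment since they share no row or column with the interior new cells. The most delicate step I expect to be verifying that every configuration of $\mathcal{K}$ either admits at least one coherent family of swaps or else forces one of the minimality contradictions above.
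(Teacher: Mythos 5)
Your setup coincides with the paper's: argue by contradiction, observe that the only new black arcs are $\beta_1=(i_1,a),\ \beta_s=(i_s,j_{s-1})\ (2\le s\le k-1),\ \beta_k=(a,j_{k-1})$, and pass to the $01$-filling representation $\mathcal{C}$, where a black $k$-crossing is a proper SE-chain of length $k$. But what you have written is a plan rather than a proof, and the step you explicitly defer (``the most delicate step I expect to be verifying\dots'') is precisely the content of the lemma. The paper does not proceed by swapping each new cell for a neighbouring old cell; its engine is a counting statement (Fact~\ref{fact:1}): if none of the intermediate $\beta_i$ with $u<i<u'$ occurs among the arcs of the hypothetical black $k$-crossing $\gamma$ lying strictly between $\beta_u$ and $\beta_{u'}$, then at most $u'-u-1$ arcs of $\gamma$ can lie there; its proof uses exactly the innermost choice of the $(k-1)$-crossing (when the relevant arc ends before $j_u$) and the absence of black $k$-crossings in $P$ (when it ends after $j_u$). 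This fact first allows one to normalize $\gamma$ so that its intersection with $\{\beta_1,\dots,\beta_k\}$ is a single consecutive run $\beta_u=\gamma_v\prec\cdots\prec\beta_{u'}=\gamma_{v'}$, and the contradiction is then obtained globally, by splicing a prefix of $\gamma$, a run of old arcs $\alpha_u,\dots,\alpha_w$ (with $w$ located by a second application of Fact~\ref{fact:1}), and a suffix of $\gamma$ into a proper SE-chain of length $k$ in $\mathcal{C}(P)$; the case split is on whether $s_1<a$ or $s_1>a$, not on which new cells appear in $\gamma$. Without a quantitative control of this kind there is no reason your cell-by-cell swaps can be made ``coherent'': when several arcs of $\gamma$ are nested strictly between consecutive $\beta$'s, a local west/south swap either destroys the SE order, loses properness, or shortens the chain, and nothing in your sketch rules this out.

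In addition, one of the obstruction resolutions you do sketch looks wrong. To dispose of the case $\beta_k=(a,j_{k-1})\in\mathcal{K}$ you invoke the minimality of $a$, claiming the configuration forces a black $(k-1)$-crossing of $P$ covering a red node strictly smaller than $a$. The paper's proof of Lemma~\ref{shift} never uses the minimality of $a$ (that choice matters for inverting $\Phi$, not here), and it is unclear why a smaller red node under such a crossing should exist at all; the contradictions actually available are either a black $k$-crossing of $P$ or a black $(k-1)$-crossing covering $a$ that is more internal than the chosen one. So the gap is genuine: the combinatorial core of the argument --- Fact~\ref{fact:1} and the two-case construction of a length-$k$ proper SE-chain in $\mathcal{C}(P)$ --- is absent from your proposal, and the cases you do address are closed with tools that do not apply.
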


\begin{proof}
Suppose that $a$ is a red node under an innermost black $(k-1)$-crossing 
$$
\alpha_1=(i_1,j_1),\alpha_2=(i_2,j_2),\ldots,\alpha_{k-1}=(i_{k-1},j_{k-1})
$$
in $D(P)$, the colored arc diagram of a partition $P$ without any black $k$-crossing. Let 
$$
\beta_1=(i_1,a),\beta_2=(i_2,j_1),\ldots,\beta_{k-1}=(i_{k-1},j_{k-2}),\beta_k=(a,j_{k-1})
$$
be the corresponding black weak $k$-crossing with $a$ as the center in the partition $\tilde{P}$
after applying the `enhanced left shift'. 

Now we proceed to show that $\tilde{P}$ has no black $k$-crossing. If not, suppose that $\tilde{P}$
contains a black $k$-crossing $\gamma=(\gamma_1,\gamma_2,\ldots,\gamma_k)$, where
$\gamma_i=(r_i,s_i)$. The following fact is crucial in our approach. 
 
\begin{Fact}\label{fact:1}
For $u<u'$, if $\{\gamma_i:\beta_{u}\prec \gamma_i\prec \beta_{u'}\}\cap\{\beta_i: u<i<u'\}=\emptyset$, then
$$
|\{\gamma_i:\beta_{u}\prec \gamma_i\prec \beta_{u'}\}|\leq u'-u-1. 
$$
\end{Fact}
\begin{proof}
There is nothing to prove if $\{\gamma_i:\beta_{u}\prec \gamma_i\prec \beta_{u'}\}=\emptyset$.
So we can suppose that $\{\gamma_i:\beta_{u}\prec \gamma_i\prec \beta_{u'}\}=\{\gamma_i: v\leq i\leq v'\}$
for some $v\leq v'$. We claim that $v'-v\leq u'-u-2$, which is equivalent to our statement.
Otherwise, $v'-v\geq u'-u-1$ and if $s_{v}<j_u$, then in $D(P)$ the red node $a$ is under
the black $(k-1)$-crossing 
$$
\alpha_1,\alpha_2,\ldots,\alpha_{u-1},\gamma_{v},\gamma_{v+1},\ldots,\gamma_{v+u'-u-1},
\alpha_{u'},\alpha_{u'+1},\ldots,\alpha_{k-1},
$$
which is more internal than $\alpha_1,\alpha_2,\ldots,\alpha_{k-1}$, a contradiction.
If $j_u<s_{v}$, then $D(P)$ contains the black $k$-crossing 
$$
\alpha_1,\alpha_2,\ldots,\alpha_{u},\gamma_{v},\gamma_{v+1},\ldots,\gamma_{v+u'-u-1},
\alpha_{u'},\alpha_{u'+1},\ldots,\alpha_{k-1},
$$
a contradiction again. This finishes the proof.
\end{proof}
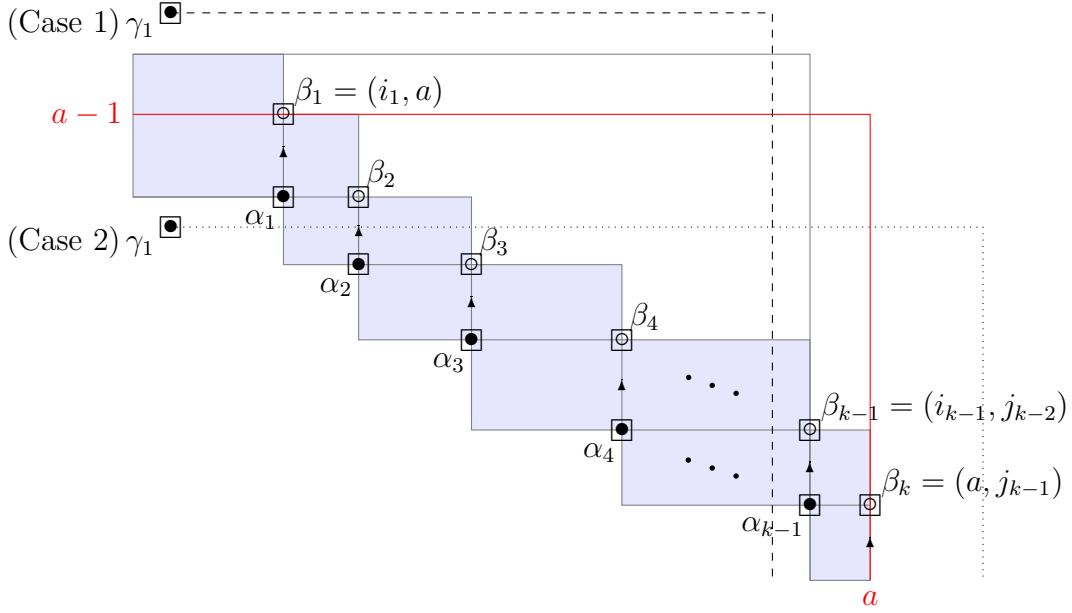
\begin{figure}
\begin{center}
\begin{tikzpicture}[scale=1]
\draw[color=gray,fill=blue!10!white] (0,6) rectangle (2,4.1);
\draw[color=gray,fill=blue!10!white] (2,3.2) rectangle (3,5.2);
\draw[color=gray,fill=blue!10!white] (3,2.2) rectangle (4.5,4.1);
\draw[color=gray,fill=blue!10!white] (4.5,1) rectangle (6.5,3.2);
\draw[color=gray,fill=blue!10!white] (6.5,0) rectangle (9,2.2);
\draw[color=gray,fill=blue!10!white] (9,-1) rectangle (9.8,1);
\draw[color=gray](0,6)--(9,6)--(9,-1)(0,4.1)--(3,4.1)(3,3.2)--(4.5,3.2)(4.5,2.2)--(6.5,2.2)(6.5,1)--(9,1)(9,0)--(9.8,0) ;
\draw[dashed]
(0.5,6.55)--(8.5,6.55)--(8.5,-1);
\node at (0.5,6.55) {$\square$}; \node at (0.5,6.55){$\bullet$};\node at (-0.7, 6.4) {{(Case 1)\,$\gamma_{1}$}};
\draw[dotted]
(0.5,3.7)--(11.3,3.7)--(11.3,-1);
\node at (0.5,3.7) {$\square$}; \node at (0.5,3.7){$\bullet$};\node at (-0.7, 3.5) {{(Case 2)\,$\gamma_{1}$}};
\draw[color=red](0,5.2)--(9.8,5.2)--(9.8,-1);
\node at (2,5.2) {$\square$};\node at (2, 5.2) {$\circ$};\node[right] at (2, 5.5) {$\beta_1=(i_1,a)$};
\node at (2,4.6) {\vector(0,1){0.1}};
\node at (2,4.1) {$\square$}; \node at (2,4.1) {$\bullet$}; \node at (1.7, 3.8) {$\alpha_1$};
\node at (3,4.1) {$\square$};\node at (3,4.1) {$\circ$};\node at (3.3,4.4) {$\beta_2$};
\node at (3,3.55) {\vector(0,1){0.1}};
\node at (3,3.2) {$\square$}; \node at (3,3.2){$\bullet$}; \node at (2.7, 2.9) {$\alpha_2$};
\node at (4.5,3.2) {$\square$}; \node at (4.5,3.2){$\circ$}; \node at (4.8, 3.5) {$\beta_3$};
\node at (4.5,2.6) {\vector(0,1){0.1}};
\node at (4.5,2.2) {$\square$}; \node at (4.5,2.2){$\bullet$};\node at (4.2, 1.9) {$\alpha_3$};
\node at (6.5,2.2) {$\square$}; \node at (6.5,2.2){$\circ$};\node at (6.8, 2.5) {$\beta_4$};
\node at (6.5,1.5) {\vector(0,1){0.1}};
\node at (6.5,1) {$\square$}; \node at (6.5,1){$\bullet$};\node at (6.2, 0.7) {$\alpha_4$};
\node at (7.7,1.7) {\Huge $\ddots$};
\node at (7.7,0.6) {\Huge $\ddots$};
\node at (9,1) {$\square$}; \node at (9,1){$\circ$};\node[right] at (9, 1.3) {$\beta_{k-1}=(i_{k-1},j_{k-2})$};
\node at (9,0.4) {\vector(0,1){0.1}};
\node at (9,0) {$\square$}; \node at (9,0){$\bullet$};\node at (8.5, -0.3) {$\alpha_{k-1}$};
\node at (9.8,0) {$\square$}; \node at (9.8,0){$\circ$};\node[right] at (9.8,0.3) {$\beta_{k}=(a,j_{k-1})$};
\node at (9.8,-0.6) {\vector(0,1){0.1}};
\node[below] at (9.8,-1) {\red{$a$}};
\node[left] at (0,5.2) {\red{$a-1$}};
\end{tikzpicture}
\caption{The `enhanced left shift' on $01$-fillings of triangular shape. There is no $\gamma$ arc inside the shadow.
The position of the cell of arc $(i_1,a)$ is $(a-1,i_1)$.}
\label{fig:leftshift}
\end{center}
\end{figure}
It follows from Fact~\ref{fact:1} that one can choose the black $k$-crossing $\gamma$ of $\tilde{P}$
properly so that the intersection 
$$
\{\beta_i: 1\leq i\leq k\}\cap\{\gamma_i: 1\leq i\leq k\}=\{\beta_{u}=\gamma_{v}\prec\beta_{u+1}=
\gamma_{v+1}\prec\cdots\prec\beta_{u'}=\gamma_{v'}\}
$$
for some $1\leq u\leq u'$ and $u'-u=v'-v$. We need to distinguish two cases according to
whether $s_1$ is greater than $a$ or not.
In the following proof, we will use the $01$-filling representation~$\mathcal{C}$ (see Fig.~\ref{fig:leftshift}).
\begin{itemize}
\item {Case 1: $s_1<a$.}\\
In this case, there is the smallest $w$, $u'\leq w\leq k-1$, such that in
$\mathcal{C}(P)$ no arc from $\{\gamma_i: v'<i\leq k\}$ appears to the northeast of $\beta_{w+1}$.
By Fact~\ref{fact:1},
$$
|\{\gamma_i:\beta_{u'}\prec \gamma_i\prec \beta_{w+1}\}|\leq w-u'. 
$$
Thus, we have $\alpha_{w}\prec\gamma_{w+v-u+1}=\gamma_{w+v'-u'+1}$ and so $\mathcal{C}(P)$
contains the proper (black) SE-chain 
$$
\gamma_1,\gamma_2,\ldots,\gamma_{v-1},\alpha_u,\alpha_{u+1},\ldots,\alpha_{w},
\gamma_{w+v-u+1},\gamma_{w+v-u+2},\ldots,\gamma_{k}
$$
of length $k$, a contradiction.
\item {Case 2: $s_1>a$.}\\
In this case, there is the greatest $w$, $1\leq w\leq u-1$, such that in
$\mathcal{C}(P)$ no arc from $\{\gamma_i: 1\leq i< v\}$ appears to the northeast of $\beta_{w}$.
Then by Fact~\ref{fact:1}, we have 
$$
|\{\gamma_i:\beta_{w}\prec \gamma_i\prec \beta_{u}\}|\leq u-w-1. 
$$
Therefore, $\gamma_{v-u+w}\prec\alpha_{w}$ and so $\mathcal{C}(P)$ contains the proper (black) SE-chain 
$$
\gamma_1,\gamma_2,\ldots,\gamma_{v-u+w},\alpha_w,\alpha_{w+1},\ldots,\alpha_{u'-1},
\gamma_{v'+1},\gamma_{v'+2},\ldots,\gamma_{k}
$$
of length $k$, a contradiction.
\end{itemize}
Since both cases lead to the contradiction, we conclude that $\tilde{P}$ can not contain any
black $k$-crossing. This proves the lemma. 
\end{proof}
\begin{lemma}\label{cyc_rotation}
The partition $P'$ obtained in step (4) of algorithm $\Phi$ has no black $k$-crossing. In other words,
the `cyclic rotation' does not create any black $k$-crossing. 
\end{lemma}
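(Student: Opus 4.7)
The proof will parallel that of Lemma~\ref{shift}. Suppose for contradiction that after applying one `cyclic rotation' to a partition $\tilde{P}$ without any black $k$-crossing, the resulting partition $P^*$ contains a black $k$-crossing $\gamma=(\gamma_1,\ldots,\gamma_k)$. Write the rotated red $k$-crossing as $\alpha_1,\alpha_2,\ldots,\alpha_k$ with rightmost red arc $\alpha_p=(i_p,j_p)$, so that the newly created arcs are $\beta_t=(i_t,j_{t+1})$ for $1\leq t\leq p-1$ together with the recolored red arc $\beta_p=(i_p,j_1)$. Since $\tilde{P}$ has no black $k$-crossing and $\beta_p$ is red in $P^*$, the crossing $\gamma$ must share at least one arc with $\{\beta_1,\ldots,\beta_{p-1}\}$.

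Working in the $01$-filling representation $\mathcal{C}$, I will establish an analogue of Fact~\ref{fact:1} that controls the number of $\gamma_i$'s lying strictly between two consecutive $\beta_u$'s in the northeast order. The core substitution replaces each $\beta_t=(i_t,j_{t+1})$ appearing in $\gamma$ by the pre-rotation arc $\alpha_{t+1}=(i_{t+1},j_{t+1})$, which shares the $j$-coordinate with $\beta_t$ but lies strictly to its right in the $i$-coordinate; stitching the result with a suitable portion of $\alpha_1,\ldots,\alpha_{p-1}$ together with the unchanged segment $\alpha_{p+1},\ldots,\alpha_k$ will exhibit a $k$-crossing in $\tilde{P}$ whose $j$-vector is lexicographically strictly greater than $(j_1,\ldots,j_k)$ and which still contains $\alpha_p$ as its rightmost red arc.

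As in Lemma~\ref{shift}, two cases arise according to whether the smallest overlap index $u$ with $\beta_u=\gamma_v$ satisfies $u>1$ or $u=1$, corresponding to whether $\gamma$ extends to the `upper-right' or to the `lower-left' of the $\beta$-chain in $\mathcal{C}(\tilde{P})$. Each case is ruled out by producing a proper SE-chain of length~$k$ in $\mathcal{C}(\tilde{P})$ whose $j$-sequence exceeds $(j_1,\ldots,j_k)$ lexicographically, thus contradicting the greatest choice of the rotated $k$-crossing in step~(4) of $\Phi$.

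The main obstacle is the bookkeeping of colors: the rotation recolors the two entire blocks containing $j_p$ and $j_1$, so arcs elsewhere in the diagram may silently switch color between $\tilde{P}$ and $P^*$. One must verify that the $k$-crossing exhibited in $\tilde{P}$ to reach the contradiction is genuinely red there, with $\alpha_p$ as its rightmost red arc. This hinges on the fact that the rotated $k$-crossing was chosen greatest in lexicographic order, which forces no red arc to lie strictly to the northeast of $\alpha_p$ in $\mathcal{C}(\tilde{P})$; consequently, a substituted chain containing $\alpha_p$ automatically qualifies as a red $k$-crossing whose rightmost red arc is $\alpha_p$, violating maximality and completing the contradiction.
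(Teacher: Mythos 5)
Your outline shares the paper's skeleton (an analogue of Fact~\ref{fact:1}, normalizing the overlap of $\gamma$ with the new arcs $\beta_1,\dots,\beta_{p-1}$, a two-case analysis), but two of your load-bearing claims are not justified and one of them is, I believe, not salvageable as stated. First, you work with $P^*$ after the recoloring and assert that a black $k$-crossing of $P^*$ must contain some $\beta_t$ ``since $\tilde P$ has no black $k$-crossing and $\beta_p$ is red.'' This ignores that the recoloring turns the arcs of the old red block lying to the right of $j_p$ black, so a black $k$-crossing of $P^*$ could a priori consist entirely of arcs already present in $\tilde P$, exactly one of which was red there; excluding this needs the choice of $(i_p,j_p)$ as the \emph{rightmost} red arc involved in a $k$-crossing of $\tilde P$, not lexicographic maximality. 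Your proposed fix --- that maximality ``forces no red arc strictly to the northeast of $\alpha_p$'' --- is beside the point: that statement is automatic (all red arcs lie in one block, hence never cross) and does not supply the needed color control. The paper avoids this entirely by proving the claim for the diagram $D^*$ \emph{before} any recoloring, and then gives a separate (if terse) argument for the recoloring and for why the iteration inside step (4) terminates in a partition with no black $k$-crossing; your proposal treats a single rotation only and leaves that reduction unaddressed.

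Second, and more seriously, your announced contradiction mechanism differs from the one that actually works. You claim both cases end by exhibiting, via the substitution $\beta_t=(i_t,j_{t+1})\mapsto\alpha_{t+1}=(i_{t+1},j_{t+1})$ plus stitching, a $k$-crossing of $\tilde P$ containing $\alpha_p$ with lexicographically larger $j$-vector. This is exactly where the work lies and no construction is given: the substitution increases left endpoints, so one must verify the crossing/properness conditions (e.g.\ that $i_{u'+1}$ stays below the left endpoint of the next retained $\gamma_{v'+1}$ and below the smallest right endpoint of the chain), and there is no reason the retained arcs $\gamma_{v'+1},\dots,\gamma_k$ cross $\alpha_p$ at all, so forcing $\alpha_p$ into the chain is in general impossible. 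In the paper, lexicographic maximality is used only in the auxiliary counting statements (Fact~\ref{fact:2} and the claim opening Case 1, where the exhibited greater crossing contains $\alpha_p$ because its tail is $\alpha_{u+1},\dots,\alpha_k$ and its first right endpoint exceeds $j_1$); the final contradiction in both cases is instead a \emph{black} proper SE-chain of length at least $k$ in $\mathcal{C}(\tilde P)$, obtained by the opposite substitution $\beta_t\mapsto\alpha_t$ (same left endpoint, indices at most $p-1$, hence black) padded with suitable $\gamma$'s, contradicting that $\tilde P$ has no black $k$-crossing --- these chains contain neither $\alpha_p$ nor any lexicographic comparison. Without carrying out one of these constructions, the decisive steps of the proof are missing.
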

\begin{proof}
\newcommand\ii{\alpha}\newcommand\jj{\beta}
Recall that $\tilde{P}$ has no red node under a black $(k-1)$-crossing and that
it has no black $k$-crossing.
Let 
$$
\alpha_1=(i_1,j_1),\alpha_2=(i_2,j_2),\ldots,\red{\alpha_p=(i_p,j_p)},\ldots,\alpha_k=(i_{k},j_{k})
$$
be the greatest red $k$-crossing involved in the `cyclic rotation' to obtain $P'$ from $\tilde{P}$.
Let us consider the colored arc diagram $D^*$ which is obtained from $D(\tilde{P})$ by changing
the above red $k$-crossing into arcs 
$$
\beta_1=(i_1,j_2), \beta_2=(i_2,j_3), \ldots, \beta_{p-1}=(i_{p-1},j_p), \red{\beta_p=(i_p,j_1)}, \alpha_{p+1},\ldots,\alpha_k,
$$
where $(i_p,j_1)$ is recolored red.

To prove this lemma, it is sufficient to show that $D^*$ has no black $k$-crossing,
because if we let $P^*$ denote the partition whose $D(P^*)$ is obtained from $D^*$
by coloring $j_p$ black, $j_1$ red, and recoloring the nodes together with the arcs in the blocks
containing $j_p$ and $j_1$ accordingly, then any (new) red arc to the right of $\beta_p$ in $P^*$
will never be involved in a red $k$-crossing (since $D^*$ has no black $k$-crossing).
Moreover, it is not difficult to see that $P^*$ has no red $k$-crossing whose first arc is red.
Therefore, the next red arc involved in a red $k$-crossing of $P^*$ must occur strictly to
the left of $j_p$ and so after a finite number of steps of `cyclic rotation' we will obtain $P'$ which has
no black $k$-crossing. 

Now we proceed to show that $D^*$ has no black $k$-crossing. If not, suppose that $D^*$
contains a black $k$-crossing $\gamma=(\gamma_1,\gamma_2,\ldots,\gamma_k)$,
where $\gamma_i=(r_i,s_i)$. We have the following property similar to Fact~\ref{fact:1}. 
\begin{Fact}\label{fact:2}
For $1\leq u<u'\leq p-1$, if $\{\gamma_i:\beta_{u}\prec \gamma_i\prec \beta_{u'}\}\cap\{\beta_i: u<i<u'\}=\emptyset$, then
$$
|\{\gamma_i:\beta_{u}\prec \gamma_i\prec \beta_{u'}\}|\leq u'-u-1. 
$$
\end{Fact}
\begin{proof}
There is nothing to prove if $\{\gamma_i:\beta_{u}\prec \gamma_i\prec \beta_{u'}\}=\emptyset$.
So we can suppose that $\{\gamma_i:\beta_{u}\prec \gamma_i\prec \beta_{u'}\}=\{\gamma_i: v\leq i\leq v'\}$
for some $v\leq v'$. We claim that $v'-v\leq u'-u-2$, which is equivalent to our statement.
Otherwise, $v'-v\geq u'-u-1$ and the red $k$-crossing of $\tilde{P}$ 
$$
\alpha_1,\alpha_2,\ldots,\alpha_{u},\gamma_{v},\gamma_{v+1},\ldots,\gamma_{v+u'-u-1},\alpha_{u'+1},\alpha_{u'+2},\ldots,\alpha_{k}
$$
is greater than $\alpha_1,\alpha_2,\ldots,\alpha_{k}$ (since $\alpha_{u}\prec\gamma_{v}$,
$\gamma_{v+u'-u-1}\prec\alpha_{u'+1}$ and $s_v>j_{u+1}$), a contradiction. This completes the proof.
\end{proof}
It follows from Fact~\ref{fact:2} that one can choose the black $k$-crossing $\gamma$ of $D^*$ properly so that
$$
\{\beta_i: 1\leq i\leq k\}\cap\{\gamma_i: 1\leq i\leq k\}=\{\beta_{u}=\gamma_{v}\prec\beta_{u+1}=
\gamma_{v+1}\prec\cdots\prec\beta_{u'}=\gamma_{v'}\}
$$
for some $1\leq u\leq u'\leq p-1$ and $v'-v=u'-u$. We will distinguish two cases and use the $01$-filling
representation $\mathcal{C}$ of $\tilde{P}$ and $D^*$ (see Fig.~\ref{fig:cycroa}).
 \begin{figure}
\begin{center}
\begin{tikzpicture}[scale=1]
\draw[color=gray,fill=blue!10!white] (0,6) rectangle (2,4);
\draw[color=gray,fill=blue!10!white] (2,3.2) rectangle (3,5.2);
\draw[color=gray,fill=blue!10!white] (3,2.2) rectangle (4.5,4);
\draw[color=gray,fill=blue!10!white] (4.5,1) rectangle (6.5,3.2);
\draw[color=gray,fill=blue!10!white] (6.5,0) rectangle (9,2.2);
\draw[color=gray,fill=blue!10!white] (9,0) rectangle (10.5,1);
\draw[color=gray]
(0,6)--(14,6)--(14,-2) (0,5.2)--(14.8,5.2)--(14.8,-2)(0,4)--(3,4)(3,3.2)--(4.5,3.2)(4.5,2.2)--(6.5,2.2)(6.5,1)--(9,1)(9,0)--(10.5,0)--(10.5,5.2);
\node at (2, 5.2) {$\square$};\node at (2, 5.2) {$\bullet$};\node at (2.3, 5.5) {$\alpha_1$};
\node at (2,4.7) {\vector(0,-1){0.1}};
\node at (2,4) {$\square$}; \node at (2,4) {$\circ$}; \node at (1.7, 3.7) {$\beta_1$};
\node at (3,4) {$\square$};\node at (3,4) {$\bullet$};\node at (3.3,4.3) {$\alpha_2$};
\node at (3,3.7) {\vector(0,-1){0.1}};
\node at (3,3.2) {$\square$}; \node at (3,3.2){$\circ$}; \node at (2.7, 2.9) {$\beta_2$};
\node at (4.5,3.2) {$\square$}; \node at (4.5,3.2){$\bullet$}; \node at (4.8, 3.5) {$\alpha_3$};
\node at (4.5,2.8) {\vector(0,-1){0.1}};
\node at (4.5,2.2) {$\square$}; \node at (4.5,2.2){$\circ$};\node at (4.2, 1.9) {$\beta_3$};
\node at (6.5,2.2) {$\square$}; \node at (6.5,2.2){$\bullet$};\node at (6.8, 2.5) {$\alpha_4$};
\node at (6.5,1.7) {\vector(0,-1){0.1}};
\node at (6.5,1) {$\square$}; \node at (6.5,1){$\circ$};\node at (6.2, 0.7) {$\beta_4$};
\node at (7.7,1.7) {\Huge $\ddots$};
\node at (7.7,0.6) {\Huge $\ddots$};
\node at (9,1) {$\square$}; \node at (9,1){$\bullet$};\node at (9.5, 1.3) {$\alpha_{p-1}$};
\node at (9,0.6) {\vector(0,-1){0.1}};
\node at (9,0) {$\square$}; \node at (9,0){$\circ$};\node at (8.8, -0.3) {$\beta_{p-1}$};
\node at (10.5,0) {$\square$}; \node at (10.5,0){\red{$\bullet$}};\node at (11.5,0.3) {\red{$\alpha_p$} (red)};
\node at (10.5,2.6) {\vector(0,1){0.1}};
\node at (10.5,5.2) {$\square$}; \node at (10.5,5.2){\red{$\circ$}};\node at (11.5,4.9) {\red{$\beta_p$} (red)};
\node at (14,-1.5) {$\square$}; \node at (14,-1.5){$\bullet$};\node at (14.3,-1.2) {$\alpha_k$};
\node at (12,-0.7) {\Huge $\ddots$};
\end{tikzpicture}
\caption{The `cyclic rotation' on $01$-fillings of triangular shape. There is no $\gamma$ arc inside the shadow.}
\label{fig:cycroa}
\end{center}
\end{figure}
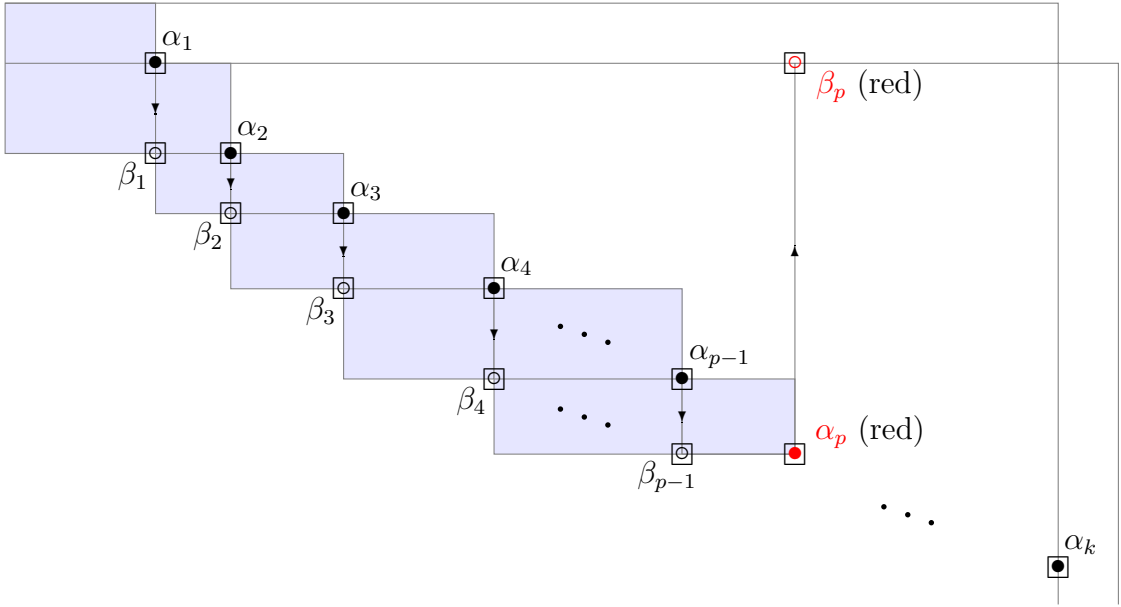
\begin{itemize}
\item {Case 1: there are arcs of $\gamma$ to the southwest of $\beta_1$ in $\mathcal{C}(D^*)$.}\\
We claim that
$$
|\{\gamma_i: \textrm{$j_{1}<s_i$ and $\gamma_i\prec \beta_{u}$}\}|<u.
$$
Otherwise, if $w=\min\{i:\textrm{$j_{1}<s_i$ and $\gamma_i\prec \beta_{u}$}\}$, then the red $k$-crossing of $\tilde{P}$ 
$$
\gamma_w,\gamma_{w+1},\ldots,\gamma_{w+u-1},\alpha_{u+1},\alpha_{u+2},\ldots,\alpha_k
$$
is greater than $\alpha_1,\alpha_2,\ldots,\alpha_{k}$, which is a contradiction.
Thus, the proper (black) SE-chain in $\mathcal{C}(\tilde{P})$ 
$$
\gamma_1,\gamma_2,\ldots,\gamma_{w-1},\alpha_1,\alpha_2,\ldots,\alpha_{u'},\gamma_{v'+1},\gamma_{v'+2},\ldots,\gamma_k
$$
has length at least $k$, which contradicts to the fact that $\tilde{P}$ has no black $k$-crossing. 
\item {Case 2: there is no arc of $\gamma$ to the southwest of $\beta_1$ in $\mathcal{C}(D^*)$.}\\
Let $w$, $2\leq w<u$, be the smallest index such that some arcs of $\gamma$ appear to
the southwest of $\beta_w$. If such an index does not exist, then simply set $w=u$.
Let $w'=\max\{i:\gamma_i\prec\beta_w\}$. It then follows from Fact~\ref{fact:2} that
$|\{\gamma_i:\beta_{w-1}\prec\gamma_i\prec\beta_u\}|\leq u-w$ and so the proper (black) SE-chain 
$$
\gamma_1,\gamma_2,\ldots,\gamma_{w'},\alpha_w,\alpha_{w+1},\ldots,\alpha_{u'}, \gamma_{v'+1},\gamma_{v'+2},\ldots,\gamma_k
$$
of $\mathcal{C}(\tilde{P})$ has length at least $k$, which is a contradiction again. 
\end{itemize}
We get the contradiction in both cases, which concludes that $D^*$ can not contain
any black $k$-crossing. The proof of the lemma is complete.
\end{proof}
\section{Final remarks, open problems}
\label{sec:final}
The main achievement of this paper is a delicate bijective proof of~\eqref{eq:NCNW} that 
extends several known special cases. At this point, we would like to pose the following 
two open problems.
\begin{Prob}
Is there any generating function approach to~\eqref{eq:NCNW}? 
\end{Prob}
It would be interesting to develop a generating tree approach to the usual and enhanced
$k$-noncrossing partitions analogous to~\cite{bemy} which may lead to a functional
equation proof of~\eqref{eq:NCNW}. 

There is a $q$-analogue of the $\gamma$-expansion of Narayana polynomials, which has 
interpretation in terms of statistics on noncrossing partitions, Dyck paths and
$321$-avoiding permutations (see~\cite{bp,lin0}). In view of the relationship in
Section~\ref{cross1} between the $\gamma$-expansion of Narayana polynomials and the
identity~\eqref{eq:t-CM}, it is natural to ask the following question. 
\begin{Prob}
Is there any nice $q$-analogue of~\eqref{eq:t-CM} which has a nice combinatorial 
interpretation?
\end{Prob}
Very recently, Gil and Tirrell~\cite{gt} found a bijective proof of~\eqref{eq:NCNW} when $t=1$.
Their bijection, though described directly using arc diagrams of partitions, is essentially
the same as our $f$ in Theorem~\ref{th:f}. 
\section*{Acknowledgement}
Authors thank Riccardo Biagioli and Jiang Zeng for helpful discussions. This work was supported
by the National Science Foundation of China grants 11871247 and 11501244,  by the Austrian
Science Foundation FWF, START grant Y463 and SFB grant F50, by the project of Qilu Young Scholars of Shandong University, and by the National Research
Foundation of Korea (NRF) grant funded by the Korea government (MSIT) (No. 2019R1F1A1062462).


\begin{thebibliography}{99}

\bibitem{bp} S.A. Blanco and T.K. Petersen, Counting Dyck paths by area and rank, Ann. Comb., {\bf18} (2014), 171--197. 

\bibitem{bx}M. Bousquet-M\'elou and G.C. Xin, On partitions avoiding $3$-crossings, S\'em. Lothar. Combin., {\bf54} (2006), Article B54e.

\bibitem{bemy}S. Burrill, S. Elizalde, M. Mishna and L. Yen, A generating tree approach to k-nonnesting partitions and permutations, Ann. Comb., {\bf20} (2016), 453--485. 

\bibitem{chen} W.Y.C. Chen, E.Y.P. Deng, R.R.X. Du, R.P. Stanley and C.H. Yan, Crossings and nestings of matchings and partitions, Trans. Amer. Math. Soc., {\bf359} (2007), 1555--1575.

\bibitem{deng} E.Y.P. Deng and W.J. Yan, Some identities on the Catalan, Motzkin and Schr\"oder numbers, Discrete Appl. Math., {\bf156} (2008), 2781--2789.

\bibitem{DS}E. Deutsch and L.W. Shapiro, A bijection between ordered trees and $2$-Motzkin paths and its many consequences, Discrete Math., {\bf256} (2002), 655--670.
 
\bibitem{do} R. Donaghey, Restricted Plane Tree Representations of Four Motzkin--Catalan equations, J. Combinatorial Theory Ser. B, {\bf22} (1977), 114--121.

\bibitem{doS} R. Donaghey and L.W. Shapiro, Motzkin numbers, J. Combinatorial Theory Ser. A, {\bf23} (1977), 291--301. 

\bibitem{ge} I.M. Gessel, S. Griffin and V. Temari, Labeled plane binary trees and Schur-positivity, \href{https://arxiv.org/abs/1706.03055}{arXiv:1706.03055}.

\bibitem{gt} J.B. Gil and J.O. Tirrell, A simple bijection for classical and enhanced $k$-noncrossing partitions, \href{https://arxiv.org/pdf/1806.09065.pdf}{arXiv:1806.09065}.
 
\bibitem{kr0} C. Krattenthaler, Growth diagrams, and increasing and decreasing chains in fillings of Ferrers shapes, Adv. Appl. Math., {\bf 37} (2006), 404--431.
 
\bibitem{lin0} Z. Lin and S. Fu, On $1212$-avoiding restricted growth functions, Electron. J. Combin., {\bf24} (2017), \#P1.53.
 
\bibitem{lin} Z. Lin, Restricted inversion sequences and enhanced $3$-noncrossing partitions, European J. Combin., {\bf70} (2018), 202--211.

\bibitem{ms} M.A. Martinez and C.D. Savage, Patterns in Inversion Sequences II: Inversion Sequences Avoiding Triples of Relations, J. Integer Seq., {\bf 21} (2018), Article 18.2.2.

\bibitem{pe2}T.K. Petersen, {\em Eulerian numbers}. With a foreword by Richard Stanley. Birkh\"auser Advanced Texts: Basler Lehrb\"ucher. Birkh\"auser/Springer, New York, 2015.

\bibitem{prw}A. Postnikov, V. Reiner and L. Williams, Faces of generalized permutohedra, Doc. Math., 13 (2008), 207--273.

\bibitem{st2} R.P. Stanley, {\em Enumerative Combinatorics}, vol. 2, Cambridge University Press, Cambridge, 1999.

\bibitem{yan} S.H.F. Yan, Ascent sequences and $3$-nonnesting set partitions, European J. Combin., {\bf39} (2014), 80--94.

\bibitem{yan2} S.H.F. Yan, Bijections for inversion sequences, ascent sequences and $3$-nonnesting set partitions, App. Math. Comput., {\bf 325} (2018), 24--30. 

\end{thebibliography}
\end{document}